\title{On Accelerating Distributed Convex Optimizations}
\author{%
  Kushal Chakrabarti \\
  Department of Electrical and Computer Engineering \\
  University of Maryland \\
  College Park, Maryland 20742, U.S.A. \\
  \texttt{kchak@terpmail.umd.edu} \\
  \And
  Nirupam Gupta \\
  \'Ecole polytechnique f\'ed\'erale de Lausanne (EPFL) \\
  CH-1015 Lausanne, Switzerland \\
  \texttt{nirupam.gupta@epfl.ch} \\
  \And
  Nikhil Chopra \\
  Department of Mechanical Engineering \\
  University of Maryland \\
  College Park, Maryland 20742, U.S.A. \\
  \texttt{nchopra@umd.edu} \\
}
\def\R{\mathbb{R}}
\def\D{\mathcal{D}}
\providecommand{\norm}[1]{\ensuremath{\left\lVert#1\right\rVert}}
\providecommand{\mnorm}[1]{\ensuremath{\left\lvert#1\right\rvert}}
\newtheorem{theorem}{\bfseries Theorem}
\newtheorem{lemma}{\bfseries Lemma}
\newtheorem{assumption}{\bfseries Assumption}
\newtheorem*{claim}{\bfseries Claim}
\newcolumntype{C}[1]{>{\centering\let\newline\\\arraybackslash\hspace{0pt}}m{#1}}
\DeclareRobustCommand{\bigO}{%
  \text{\usefont{OMS}{cmsy}{m}{n}O}%
}
\definecolor{Gray}{gray}{0.85}
\begin{document}

\maketitle

\begin{abstract}
This paper studies a distributed multi-agent convex optimization problem. The system comprises multiple agents in this problem, each with a set of local data points and an associated local cost function. The agents are connected to a server, and there is no inter-agent communication. The agents' goal is to learn a parameter vector that optimizes the aggregate of their local costs without revealing their local data points. In principle, the agents can solve this problem by collaborating with the server using the traditional distributed gradient-descent method. However, when the aggregate cost is {\em ill-conditioned}, the gradient-descent method (i) requires a large number of iterations to converge, and (ii) is highly unstable against process noise. We propose an {\em iterative pre-conditioning} technique to mitigate the deleterious effects of the cost function's conditioning on the convergence rate of distributed gradient-descent. Unlike the conventional pre-conditioning techniques, the pre-conditioner matrix in our proposed technique updates iteratively to facilitate implementation on the distributed network. In the distributed setting, we provably show that the proposed algorithm converges {\em linearly} with an improved rate of convergence than the traditional and adaptive gradient-descent methods. Additionally, for the special case when the minimizer of the aggregate cost is unique, our algorithm converges {\em superlinearly}. We demonstrate our algorithm's superior performance compared to prominent distributed algorithms for solving real logistic regression problems and emulating neural network training via a noisy quadratic model, thereby signifying the proposed algorithm's efficiency for distributively solving non-convex optimization. Moreover, we empirically show that the proposed algorithm results in faster training without compromising the generalization performance.
\end{abstract}
\section{Introduction}
\label{sec:intro}

In this paper, we consider solving multi-agent distributed convex optimization problems. Precisely, we consider $m$ agents in the system. The agents can only interact with a common server and the overall system is assumed to be synchronous. Each agent $i \in \{1,\ldots,m\}$ has a set of local data points and a {\em local cost function} $f^i: \R^d \to \R$ associated with its local data points. The aim of the agents is to compute a minimum point of the {\em aggregate cost function} $f:\R^d \to \R$, taking values $f(x) = \sum_{i = 1}^m f^i(x)$ for each $x \in \R^d$, across all the agents in the system. Formally, the goal of the agents is to {\em distributively} compute a common parameter vector $x^* \in \R^d$ such that
\begin{align}
    x^* \in X^* = \arg \min_{x \in \R^d} \sum_{i = 1}^m f^i(x). \label{eqn:opt_1}
\end{align}
Since each agent only partially knows the collective data points, they collaborate with the server for solving the distributed problem~\eqref{eqn:opt_1}. However, the agents \underline{cannot} share their local data points to the server. An algorithm that enables the agents to jointly solve the above problem in the aforementioned settings is defined as a {\em distributed algorithm}.

In principle, the optimization problem~\eqref{eqn:opt_1} can be solved using the distributed gradient-descent (GD) algorithm~\cite{bertsekas1989parallel}. Built upon the prototypical gradient-descent method, several {\em adaptive gradient algorithms} have been proposed, which distributively solve~\eqref{eqn:opt_1}~\cite{nesterov27method, polyak1964some, kingma2014adam, kelley1999iterative,duchi2011adaptive,zeiler2012adadelta,dozat2016incorporating}. Amongst them, some notable distributed algorithms are Nesterov's accelerated gradient-descent (NAG)~\cite{nesterov27method}, heavy-ball method (HBM)~\cite{polyak1964some}, adaptive momentum estimation (Adam)~\cite{kingma2014adam}, BFGS method~\cite{kelley1999iterative}. All these methods are iterative in nature, wherein the server maintains an estimate of a solution defined in~\eqref{eqn:opt_1} and iteratively refines it using the sum of {\em local gradients} computed by the agents.

If the aggregate cost function $f$ is convex, the distributed GD method converges at a rate of $\bigO(1/t)$, where $t$ is the number of iterations~\cite{fessler2008image}. The momentum-based methods, such as NAG, HBM, and Adam, improve upon the convergence rate of GD. In particular, the Adam method has been demonstrated to compare favorably with other optimization algorithms for a wide range of machine learning problems~\cite{wu2016google,radford2015unsupervised,peters2018deep}. However, for general convex cost functions $f$, these algorithms converges at a {\em sublinear} rate~\cite{tong2019calibrating,su2014differential}. For the special case of strong convex cost function $f$, the aforementioned methods converge {\em linearly}~\cite{fessler2008image,tong2019calibrating,su2014differential}. Quasi-Newton methods, such as BFGS, explore second-order information of the cost functions. If the aggregate cost function's Hessian is positive definite at a solution of~\eqref{eqn:opt_1}, then the BFGS method locally converges to a solution at {\em superlinear} rate~\cite{kelley1999iterative}. 

\begin{wrapfigure}{r}{0.5\textwidth}
  \centering
  \includegraphics[width = 0.5\textwidth]{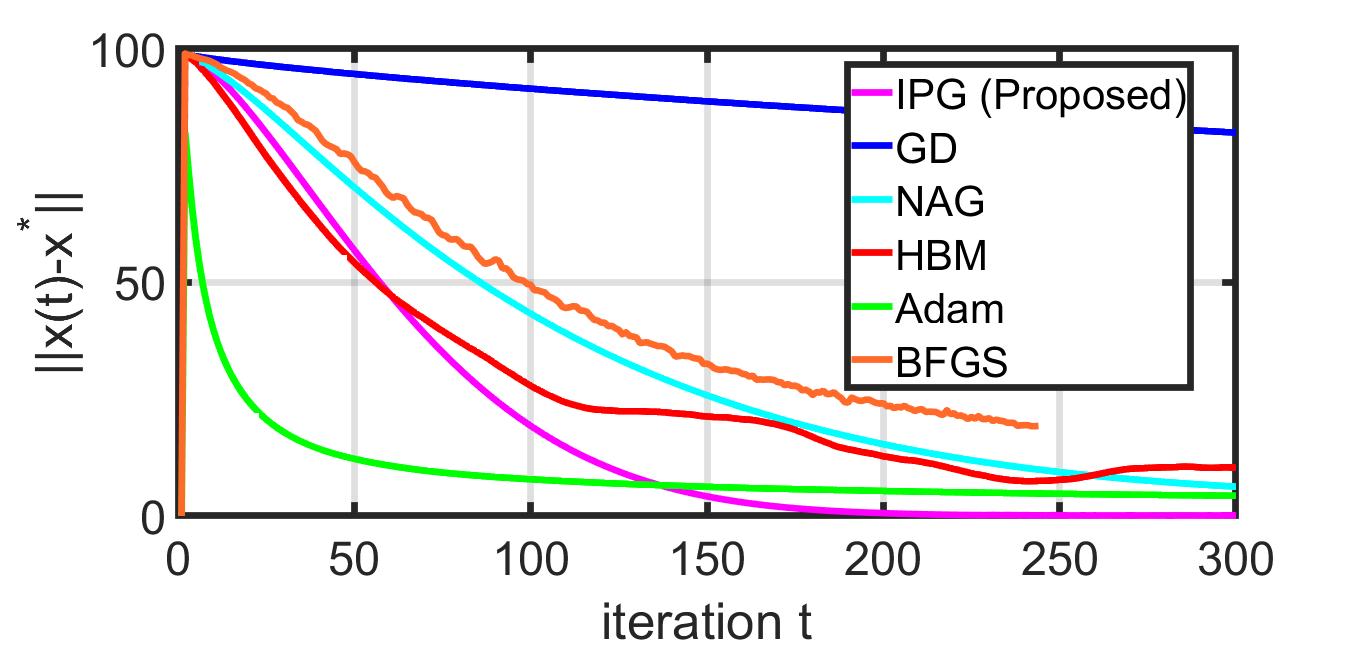}
  \caption{\footnotesize{Estimation error $\norm{x(t)-x^*}$ of different algorithms for the noisy quadratic model of neural network training~\cite{zhang2019algorithmic}. All the algorithms have the same initial estimate $\{x_i(0) \in \mathcal{N}(0,1), ~ i=1,\ldots,d\}$. Other parameters are in the supplemental material.}}
  \label{fig:nqm}
\end{wrapfigure}

We propose an {\em adaptive gradient algorithm} for improving the convergence rate of the distributed gradient-descent method when solving the convex optimization problem~\eqref{eqn:opt_1} in distributed networks. The key concept in our proposed method is {\em iterative pre-conditioning}. The idea of {\em iterative pre-conditioning} has been originally proposed in~\cite{chakrabarti2020iterative}, wherein the server updates the estimate using the sum of the agents' gradient multiplied with a suitable iterative pre-conditioning matrix. However,~\cite{chakrabarti2020iterative} considers only quadratic cost functions. Note that the iterative pre-conditioning in~\cite{chakrabarti2020iterative} does not trivially extend to general cost functions due to non-linearity in the gradients. The proposed algorithm rigorously extends that idea of iterative pre-conditioning to general convex optimization problems~\eqref{eqn:opt_1}. Using real-world datasets, we empirically show that the proposed algorithm \underline{converges in fewer iterations} compared to the aforementioned state-of-the-art methods for solving the distributed convex optimization problem~\eqref{eqn:opt_1}. Besides empirical results, we also present a formal analysis of the proposed algorithm's convergence.

Several research works have used quadratic models for approximating the loss functions of neural networks~\cite{lucas2021analyzing, schaul2013no,jastrzkebski2017three, zhang2019lookahead, wu2018understanding, wen2019empirical}. Quadratic model-based analyses of neural networks have produced vital insights, including learning rate scheduling~\cite{schaul2013no}, and the Bayesian viewpoint of SGD with fixed stepsize~\cite{mandt2017stochastic}. 
The noisy quadratic model (NQM) with carefully chosen model parameters is a proxy for neural network training.
The NQM parameters proposed in~\cite{zhang2019algorithmic} make predictions that are aligned with deep neural networks in realistic experimental settings. The results in~\cite{zhang2019algorithmic} are supported by further evidence from~\cite{lee2020wide}, which rigorously show that quadratic loss function model governs infinite width neural network of arbitrary depth. The theoretical NQM in~\cite{wu2018understanding} correctly captures the short-horizon bias of learning rates in neural network training. Thus, although the general optimization model of neural networks is non-convex, noisy convex quadratic models such as~\cite{zhang2019algorithmic} trim away inessentials features while capturing the key aspects of real neural network training, including generalization performance~\cite{wen2019empirical} and the effects of pre-conditioning on gradients. This motivates us to implement our proposed iterative pre-conditioning scheme on the noisy quadratic model~\cite{zhang2019algorithmic} of neural networks and empirically validating it for solving general non-convex optimization problems, including neural network training.

\subsection{Summary of our contributions}
\label{sub:contri}

Our key contributions can be summarized as follows.

\begin{itemize}
    \item
    We formally show that our algorithm converges {\em linearly} for a general class of convex cost functions when the Hessian is non-singular at a solution of~\eqref{eqn:opt_1}. In the special case when the solution of~\eqref{eqn:opt_1} is unique, the convergence of our algorithm is {\em superlinear}. Formal details are presented in Theorem~\ref{thm:conv} and Theorem~\ref{thm:superlinear} in Section~\ref{sub:conv}. 
    
    \item We rigorously show that the convergence rate of our algorithm compares favorably to prominent distributed algorithms, namely the GD, NAG, HBM, and Adam methods.
    
    \begin{itemize}
        \item[-] When the solution~\eqref{eqn:opt_1} is unique, our algorithm converges {\em superlinearly} which is only comparable to the BFGS method~\cite{kelley1999iterative}. The convergence of the other aforementioned algorithms, on the other hand, is only {\em linear}~\cite{fessler2008image,tong2019calibrating,su2014differential}.
        
        \item[-] Moreover, in the general case, our algorithm converges {\em linearly}. On the other hand, the convergence of GD, NAG, HBM, and Adam is {\em sublinear}~\cite{fessler2008image,tong2019calibrating,su2014differential}. Only the BFGS method has {\em superlinear} rate in this case.
    \end{itemize}
    
    \item Though numerical experiments on real-world data analysis problems, we demonstrate the improved convergence rate of our algorithm.
    
    
    \begin{itemize}
        \item[-] The noisy quadratic model in~\cite{zhang2019algorithmic} has been claimed to emulate neural network training. Our empirical study shows that the proposed algorithm converges faster than the aforementioned distributed methods on this model, thereby demonstrating the proposed algorithm's efficiency for distributed solution of non-convex optimization problems. Please refer to Section~\ref{sub:nqm} for further details.
        
        \item[-] Our empirical results for distributed binary logistic regression problem on the ``MNIST'' and ``CIFAR-10'' datasets validate the proposed algorithm's superior convergence rate and comparable test accuracy, particularly under the influence of process noise in the system and the stochastic mini-batch setting. Our algorithm's final estimated training loss is favorable to BFGS and comparable to the other methods. Please refer to Section~\ref{sub:log_reg} for further details.
    \end{itemize}
\end{itemize}

\section{Proposed algorithm: Iteratively Pre-conditioned Gradient-descent (IPG)}
\label{sec:algo}

In this section, we present our algorithm. Our algorithm follows the basic prototype of the distributed gradient-descent method. However, a notable difference is that in our algorithm, the server multiplies the aggregate of the gradients received from the agents by a {\em pre-conditioner} matrix. The server uses the {\em pre-conditioned} aggregates of the agents' gradients to update its current estimates. This technique is commonly known as {\em pre-conditioning} \cite{nocedal2006numerical}. When the aggregate cost $f$ is strongly convex, the best pre-conditioner matrix for the gradient-descent method is the inverse Hessian matrix $\nabla^2 f(x(t))^{-1}$, resulting in Newton's method which converges {\em superlinearly}~\cite{kelley1999iterative}. However, $\nabla^2 f(x(t))^{-1}$ cannot be computed directly in a distributed setting as it requires the agents to send their local Hessian matrices $\nabla^2 f^i(x(t))$ to the server, which may require the agents to share their local data points, such as the quadratic optimization problem~\cite{chakrabarti2020iterative}. Thus, we propose a distributed scheme where the server iteratively updates the pre-conditioner matrix without requiring the agents to share their local Hessian and their local data points in such a way that it linearly converges to $\nabla^2 f(x(t))^{-1}$. Thus, our algorithm eventually converges to Newton's method and has {\em superlinear} convergence rate for strongly convex problems, as shown later in Section~\ref{sub:conv}.

Next, we describe the proposed Iteratively Pre-conditioned Gradient-descent (IPG) algorithm. The algorithm is iterative when in each iteration $t \in \{0, \, 1, \ldots\}$, the server maintains an estimate $x(t)$ of a minimum point~(\ref{eqn:opt_1}), and a pre-conditioner matrix $K(t) \in \R^{d \times d}$. Both the estimate and the pre-conditioner matrix are updated using steps presented below.

{\bf Initialization:} Before starting the iterative process, the server chooses an initial estimate $x(0)$ and a pre-conditioner matrix $K(0)$ from $\R^d$ and  $\R^{d \times d}$, respectively. Further, the server chooses a sequence of non-negative scalar parameters $\{\alpha(t), t\geq 0\}$ and two non-negative scalar real-valued parameters $\delta, ~\beta$. The server broadcasts parameter $\beta$ to all the agents. The specific values of these parameters are presented later in Section~\ref{sub:conv}.

\subsection{Steps in each iteration $t$}
\label{sub:algo_steps}

In each iteration $t$, the algorithm comprises four steps that are executed collaboratively by the server and the agents. 

\begin{itemize}
    \item {\em Step 1:} The server sends the estimate $x(t)$ and the matrix $K(t)$ to each agent $i \in \{1,\ldots,m\}$.
    \item {\em Step 2:} Each agent $i \in \{1,\ldots,m\}$ computes the gradient $g^i(t)$ of its local cost function, defined as
    \begin{align}
        g^i(t) = \nabla f^i(x(t)). \label{eqn:g_i}
    \end{align}
    Let $I$ denote the $(d \times d)$-dimensional identity matrix. Let $e_j$ and $k_j(t)$ denote the $j$-th columns of matrices $I$ and $K(t)$, respectively, so that $K(t) = \begin{bmatrix} k_1(t), \ldots, k_d(t) \end{bmatrix}$.
    
    In the same step, each agent $i$ computes a set of vectors $\left\{R^i_j(t): ~ j = 1, \ldots, \, d \right\}$ such that for each $j$,
    \begin{align}
        R^i_j(t) = \left(\nabla^2 f^i(x(t)) + \left(\dfrac{\beta}{m}\right) I\right) k_j(t) - \left(\frac{1}{m}\right) e_j. \label{eqn:rij}
    \end{align}
    \item {\em Step 3:} Each agent $i$ sends gradient $g^i(t)$ and the set of vectors $\left\{R^i_j(t), ~ j = 1, \ldots, \, d \right\}$ to the server.
    \item {\em Step 4:} The server updates the estimate $x(t)$ to $x(t + 1)$ such that
    \begin{eqnarray}
        x(t+1) = x(t) - \delta K(t) \sum_{i=1}^m g^i(t). \label{eqn:x_update}
    \end{eqnarray}
    The server updates the pre-conditioner matrix $K(t)$ to $K(t+1)$ such that
    \begin{eqnarray}
        k_j(t + 1) = k_j(t) - \alpha(t) \sum_{i=1}^m R^i_j(t), \quad j = 1,...,d. \label{eqn:kcol_update}
    \end{eqnarray}
\end{itemize}

Next, we formally analyze convergence of the proposed IPG algorithm. For analysis, we consider only full batched-data. However, IPG is applicable also to the stochastic setting, as we show later in the experimental results (ref. Section~\ref{sec:exp}).

\subsection{Convergence guarantee}
\label{sub:conv}


We make the following assumptions for our theoretical results. Recall that $f$ denotes the aggregate cost function, i.e., $f = \sum_{i=1}^m f^i$, and $x^*$ denotes a minimum point of $f$, defined in~\eqref{eqn:opt_1}.

\begin{assumption} \label{assump_1}
Assume that the minimum of function $f$ exists and is finite, i.e., $\mnorm{\min_{x \in \R^d} f(x)} < \infty$.
\end{assumption}

\begin{assumption} \label{assump_2}
Assume that each local cost function $f^i$ is convex and twice continuously differentiable over a convex domain $\D \subseteq \R^d$ containing the set of minimum points $X^*$, defined in~\eqref{eqn:opt_1}. 
\end{assumption}

\begin{assumption} \label{assump_3}
Assume that the Hessian $\nabla^2 f$ is Lipschitz continuous over the domain $\D$ with respect to the $2$-norm with Lipschitz constant $\gamma$. Specifically, for any $x,y \in \D$,
\begin{align*}
    \norm{\nabla^2 f(x) - \nabla^2 f(y)} \leq \gamma \norm{x - y}.
\end{align*}
\end{assumption}

\begin{assumption} \label{assump_4}
Assume that the Hessian $\nabla^2 f$ is non-singular at any minimum point $x^* \in X^*$.
\end{assumption}

Assumption~\ref{assump_2} implies that the aggregate cost function $f = \sum_{i=1}^m f^i$, defined in~\eqref{eqn:opt_1}, is twice continuously differentiable over $\D$. Thus, its gradient, denoted by $\nabla f$ is Lipschitz continuous over $\D$. In other words, there exists a positive real valued constant scalar $l$ such that for any $x,y \in \D$,
\begin{align}
    \norm{\nabla f(x) - \nabla f(y)} \leq l \norm{x - y}. \label{eqn:lip_grad}
\end{align}

{\bf Notation:} To formally state our convergence result we introduce some notation below.
\begin{itemize}
    \item For $\beta > 0$, we define $$K^* = \left(\nabla^2 f(x^*) + \beta I\right)^{-1}.$$ 
    Under Assumption~\ref{assump_2}; the function $f$ is convex, thus $\left(\nabla^2 f(x^*) + \beta I\right)$ is positive definite when $\beta > 0$, and hence $K^*$ is well-defined. 
    \item We let $\eta$ denote the induced $2$-norm of $K^*$, i.e., $\eta = \norm{K^*}$.
    \item For each iteration $t \geq 0$, we define $$\rho(t) = \norm{I - \alpha(t) \left(\nabla^2 f(x(t)) + \beta I\right)}.$$ 
    \item We let $\lambda_{max} \left[\cdot \right]$ and $\lambda_{min} \left[\cdot \right]$, respectively denote the largest and smallest eigenvalue of a matrix.
\end{itemize}

Lemma~\ref{lem:rho} below states a preliminary result about the convergence of the sequence of pre-conditioner matrices $\{K(t), t \geq 0\}$ to $K^*$. This lemma is important for our key results presented afterward. Proof of Lemma~\ref{lem:rho} is deferred to Appendix~\ref{prf:lem1}.

\begin{lemma} \label{lem:rho}
Consider the IPG Algorithm with parameters $\beta > 0$ and $\alpha(t)$ subject to 
\begin{align}
    0 < \alpha(t) < \frac{1}{\lambda_{max} \left[\nabla^2 f(x(t))\right] + \beta}, \quad \forall t \geq 0. \label{eqn:alpha_cond_1}
\end{align}
Then, under Assumptions~\ref{assump_1} and~\ref{assump_2}, $\rho(t) \in [0, \, 1)$ for all $t \geq 0$. 
\end{lemma}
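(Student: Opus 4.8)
The plan is to exploit that the matrix inside $\rho(t)$ is symmetric, so that its induced $2$-norm equals its spectral radius, and then to bound the eigenvalues directly using the stated upper bound on $\alpha(t)$.

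First I would fix an arbitrary $t \ge 0$ and abbreviate $H(t) = \nabla^2 f(x(t))$. By Assumption~\ref{assump_2} the aggregate cost $f = \sum_{i=1}^m f^i$ is convex and twice continuously differentiable, so $H(t)$ is symmetric and positive semidefinite; I would write its eigenvalues as $0 \le \lambda_1 \le \cdots \le \lambda_d = \lambda_{max}[H(t)]$. Then $M(t) := H(t) + \beta I$ is symmetric and, since $\beta > 0$, positive definite, with eigenvalues $\mu_k = \lambda_k + \beta$ lying in $[\beta, \, \lambda_{max}[H(t)] + \beta]$. Because $I - \alpha(t) M(t)$ is symmetric, its induced $2$-norm equals its spectral radius, giving $\rho(t) = \max_k \mnorm{1 - \alpha(t)\mu_k}$.

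The remaining step would be a short eigenvalue estimate. From $\alpha(t) > 0$ and $\mu_k > 0$ I get $1 - \alpha(t)\mu_k < 1$ for each $k$. For the other side, the hypothesis $\alpha(t) < 1/(\lambda_{max}[H(t)] + \beta)$ yields $\alpha(t)\mu_k \le \alpha(t)(\lambda_{max}[H(t)] + \beta) < 1$, so that $1 - \alpha(t)\mu_k > 0$. Hence every eigenvalue of $I - \alpha(t) M(t)$ lies in the open interval $(0,1)$, and therefore $0 \le \rho(t) < 1$; since $t$ was arbitrary, the claim holds for all $t \ge 0$.

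I do not expect a genuine obstacle, as the statement reduces to a spectral computation. The only point meriting care is the interplay of the two bounds: strict positivity of $\beta$ (hence of every $\mu_k$) is what keeps the eigenvalues strictly below $1$, while the upper bound on $\alpha(t)$ is exactly what keeps them strictly above $0$, and convexity from Assumption~\ref{assump_2} is what guarantees $H(t) \succeq 0$ so that no $\mu_k$ can vanish or turn negative. Assumption~\ref{assump_1} is needed only to ensure the problem is well-posed and plays no active role in the estimate; I would also note that the statement implicitly presumes $x(t) \in \D$ so that $\nabla^2 f(x(t))$ is defined.
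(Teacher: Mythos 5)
Your proof is correct and follows essentially the same route as the paper's: the paper likewise argues that convexity (Assumption~\ref{assump_2}) plus $\beta > 0$ makes $\nabla^2 f(x(t)) + \beta I$ positive definite, and then concludes $\rho(t) < 1$ from the bound~\eqref{eqn:alpha_cond_1}; you have merely made the implicit spectral computation explicit. The only difference is that the paper's appendix proof also derives the recursion for $\norm{\widetilde{K}(t+1)}$ along the way, but that material is setup for Theorem~\ref{thm:conv} rather than part of the lemma's actual argument.
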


Note that under the conditions assumed in Lemma~\ref{lem:rho}, 
\begin{align*}
    \rho := \sup_{t \geq 0} \rho(t) \text{  exists, and is less than } 1.
\end{align*}
We now present below in Theorem~\ref{thm:conv} a key convergence result of our proposed IPG method. Proof of Theorem~\ref{thm:conv} is deferred to Appendix~\ref{prf:thm1}. Recall that $x^*$ denotes a minimum point of $f$, defined in~\eqref{eqn:opt_1}.

\begin{theorem} \label{thm:conv}
Suppose that Assumptions~\ref{assump_1},~\ref{assump_2},~\ref{assump_3} and \ref{assump_4} hold true. Consider the IPG Algorithm with parameters $\beta > 0$, $\delta = 1$ and $\alpha(t)$ satisfying~\eqref{eqn:alpha_cond_1} for all $t \geq 0$. Let the parameter $\beta$, the initial estimate $x(0) \in \D$ and pre-conditioner matrix $K(0)$ be chosen such that
\begin{align}
    \frac{\eta \gamma}{2} \norm{x(0) - x^*} + l \norm{K(0) - K^*} + \eta \beta \leq \frac{1}{2\mu} \label{eqn:init_cond}
\end{align}
where $\mu \in \left(1, ~ \frac{1}{\rho} \right)$ and $\eta = \norm{K^*}$. If for $t \geq 0$,
\begin{align}
    \alpha(t) < \min\left\{\frac{1}{\lambda_{max} \left[\nabla^2 f(x(t))\right] + \beta}, ~ \frac{\mu^t (1-\mu \rho)}{2l(1-(\mu \rho)^{t+1})}\right\}, \label{eqn:alpha_cond}
\end{align}
then we obtain that, for all $t \geq 0$,
\begin{align}
    \norm{x(t+1) - x^*} & \leq \frac{1}{\mu} \norm{x(t) - x^*}. \label{eqn:x_linear}
\end{align}
\end{theorem}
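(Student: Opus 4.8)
The plan is to convert the two server-side updates into error recursions about the fixed points $x^*$ and $K^*$, and then reduce the theorem to a single scalar inequality that is propagated by induction. First I would aggregate the per-agent quantities: summing \eqref{eqn:rij} over $i$ gives $\sum_{i=1}^m R^i_j(t) = (\nabla^2 f(x(t)) + \beta I)k_j(t) - e_j$, so the column update \eqref{eqn:kcol_update} reads, in matrix form, $K(t+1) = (I - \alpha(t)(\nabla^2 f(x(t)) + \beta I))K(t) + \alpha(t)I$; likewise $\sum_{i=1}^m g^i(t) = \nabla f(x(t))$, so \eqref{eqn:x_update} with $\delta=1$ becomes $x(t+1) = x(t) - K(t)\nabla f(x(t))$. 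The matrix $K^*$ is exactly the fixed point of the idealized pre-conditioner map, since $K^*(\nabla^2 f(x^*)+\beta I) = I$.

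Next I would derive the two coupled error bounds. For the pre-conditioner, subtracting the fixed-point relation and inserting $\pm(I-\alpha(t)(\nabla^2 f(x(t))+\beta I))K^*$ yields $K(t+1)-K^* = (I-\alpha(t)(\nabla^2 f(x(t))+\beta I))(K(t)-K^*) + \alpha(t)(\nabla^2 f(x^*)-\nabla^2 f(x(t)))K^*$; taking norms and using Assumption~\ref{assump_3} together with $\eta=\norm{K^*}$ gives $\norm{K(t+1)-K^*} \le \rho(t)\norm{K(t)-K^*} + \alpha(t)\eta\gamma\norm{x(t)-x^*}$. For the estimate, I would use $\nabla f(x^*)=0$ and the integral form $\nabla f(x(t)) = H(t)(x(t)-x^*)$ with $H(t)=\int_0^1 \nabla^2 f(x^*+s(x(t)-x^*))\,ds$, so that $x(t+1)-x^* = (I-K(t)H(t))(x(t)-x^*)$. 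Writing $I-K(t)H(t) = \beta K^* + K^*(\nabla^2 f(x^*)-H(t)) + (K^*-K(t))H(t)$ and bounding each term (Assumption~\ref{assump_3} contributes the factor $1/2$ from the integral, and $\norm{H(t)}\le l$ from \eqref{eqn:lip_grad}) gives $\norm{x(t+1)-x^*} \le C(t)\norm{x(t)-x^*}$ with $C(t) = \eta\beta + \tfrac{\eta\gamma}{2}\norm{x(t)-x^*} + l\norm{K(t)-K^*}$.

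With these in hand the theorem reduces to showing $C(t)\le 1/\mu$ for every $t$. I would prove this by induction: the base case is immediate from the initial condition \eqref{eqn:init_cond}, which states precisely $C(0)\le 1/(2\mu)$, and the inductive step substitutes the upper bound \eqref{eqn:alpha_cond} on $\alpha(t)$ into the pre-conditioner recursion, using the algebraic identity $\tfrac{\mu^t(1-\mu\rho)}{1-(\mu\rho)^{t+1}} = \mu^t/\sum_{j=0}^t(\mu\rho)^j$ so that the $\mu^t$ in the numerator cancels against the established decay $\norm{x(t)-x^*}\le\mu^{-t}\norm{x(0)-x^*}$.

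The main obstacle is closing this induction, because the two error sequences are genuinely coupled: the contraction of $\norm{x(t)-x^*}$ requires $K(t)$ to be accurate, while the forcing term in the $K$-recursion is itself driven by $\norm{x(t)-x^*}$. A naive decoupling, in which one bounds the pre-conditioner forcing using only the worst-case estimate $\norm{x(t)-x^*}\le\mu^{-t}\norm{x(0)-x^*}$, is too lossy: the resulting geometric sum for $\norm{K(t)-K^*}$ need not stay within the residual budget left by \eqref{eqn:init_cond}, and indeed $C(t)$ is allowed to grow above $1/(2\mu)$ (though never above $1/\mu$) as the pre-conditioner error builds up. The delicate point is therefore to track both errors jointly and exploit that the rapid decay of the estimate error tempers the pre-conditioner forcing, with the schedule \eqref{eqn:alpha_cond} and the constraint $\mu\in(1,1/\rho)$ (so that $\rho<1/\mu$ and $\rho(t)\le\rho<1$ by Lemma~\ref{lem:rho}) calibrated to keep $C(t)$ below $1/\mu$ at every step. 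A secondary technical point is to confirm that the iterates remain in the domain $\D$ throughout, which follows once $\norm{x(t)-x^*}$ is shown to be non-increasing.
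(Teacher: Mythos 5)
Your proposal is correct and follows essentially the same route as the paper's own proof: the same two coupled error recursions (your contraction factor $C(t)$ is exactly the paper's inequality \eqref{eqn:zt_kt}, and your pre-conditioner recursion is the paper's \eqref{eqn:norm_kt}), followed by the same induction in which the iterated $K$-recursion's forcing term is bounded via the worst-case decay $\norm{x(s)-x^*}\le\mu^{-s}\norm{x(0)-x^*}$ and the schedule \eqref{eqn:alpha_cond} cancels the resulting geometric sum $\sum_{j=0}^{t}(\mu\rho)^{j}$ against $\mu^{t}$, keeping $C(t)$ below $1/\mu$ --- which is precisely the paper's strategy, so your ``naive decoupling'' caveat actually describes the argument the paper uses, made to work by the calibrated step-size hypothesis. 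The remaining differences are cosmetic: you decompose $I-K(t)H(t)$ at the operator level where the paper splits $K(t)=K^{*}+\widetilde{K}(t)$ and bounds $\norm{\nabla f(x(t))}\le l\norm{x(t)-x^{*}}$ directly, and your invariant $C(t)\le 1/\mu$ is equivalent to the paper's inductive claim \eqref{eqn:clm}.
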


Since $\mu > 1$, Theorem~\ref{thm:conv} implies that the sequence of estimates $\{x(t),t\geq0\}$ locally converges to a solution $x^* \in X^*$ with a {\em linear} convergence rate $\frac{1}{\mu}$. To obtain a simpler condition on $\alpha(t)$, compared to~\eqref{eqn:alpha_cond}, we can use a more conservative upper bound. Specifically, let $\Lambda$ be an upper bound of $\lambda_{max} \left[\nabla^2 f(x(t))\right]$ that holds true for each iteration $t$. From condition~\eqref{eqn:init_cond} in Theorem~\ref{thm:conv}, we have $0 < \mu \rho < 1$ and $\mu > 1$. Thus, 
\[\frac{\mu^t (1-\mu \rho)}{2l(1-(\mu \rho)^{t+1})} > \frac{\mu^t (1-\mu \rho)}{2l}. \]
Ultimately, from above we infer that if 
\[ \alpha(t) < \min\left\{\frac{1}{\Lambda + \beta}, ~ \frac{\mu^t (1-\mu \rho)}{2l}\right\}, \quad \forall t \geq 0,\]
then condition~\eqref{eqn:alpha_cond} is implied, and Theorem~\ref{thm:conv} holds true.

\paragraph{The case of strong convexity:} We further show that our algorithm attains {\em superlinear} convergence when the aggregate cost function $f$ is {\em strongly convex} (however, the local costs may only be convex), as stated in the assumption below.

\begin{assumption} \label{assump_5}
Assume that the aggregate cost function $f$ is strongly convex over the domain $\D$.
\end{assumption}

In this case, the Hessian $\nabla^2 f$ is positive definite over the entire domain $\D$. Thus, solution to problem~\eqref{eqn:opt_1} is unique, which we denote by $x^*$. Under Assumption~\ref{assump_5}, we show that IPG method with parameter $\beta = 0$ converges {\em superlinearly}. 

\begin{theorem} \label{thm:superlinear}
Consider the IPG Algorithm presented in Section~\ref{sub:algo_steps}, with parameters $\beta = 0$, $\delta = 1$, and $\alpha(t)$ for each iteration $t \geq 0$. If Assumptions~\ref{assump_1}-\ref{assump_3}, Assumption~\ref{assump_5}, and the conditions~\eqref{eqn:init_cond}-\eqref{eqn:alpha_cond} are satisfied, then the following statement holds true:
\begin{align}
    \lim_{t \to \infty} \frac{\norm{x(t+1) - x^*}}{\norm{x(t) - x^*}} & = 0. \label{eqn:x_superlinear}
\end{align}
\end{theorem}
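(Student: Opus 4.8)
The plan is to bound the per-step ratio in~\eqref{eqn:x_superlinear} by a quantity that provably tends to zero, exploiting that with $\beta = 0$ the limiting pre-conditioner $K^* = (\nabla^2 f(x^*))^{-1}$ is the \emph{exact} inverse Hessian at $x^*$. First I would linearize the gradient around $x^*$. Since $\nabla f(x^*) = 0$ by optimality, the fundamental theorem of calculus gives $\nabla f(x(t)) = H(t)\,(x(t) - x^*)$, where $H(t) = \int_0^1 \nabla^2 f\big(x^* + s(x(t) - x^*)\big)\,ds$ is the averaged Hessian along the segment joining $x^*$ and $x(t)$ (this segment lies in $\D$ since $\D$ is convex and contains both points). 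Substituting into the update $x(t+1) = x(t) - K(t)\nabla f(x(t))$ with $\delta = 1$ yields the identity $x(t+1) - x^* = \big(I - K(t)H(t)\big)(x(t) - x^*)$, so the ratio in~\eqref{eqn:x_superlinear} is bounded above by $\norm{I - K(t)H(t)}$.

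Next I would show $\norm{I - K(t)H(t)} \to 0$. The decisive point is that $\beta = 0$: strong convexity (Assumption~\ref{assump_5}) makes $\nabla^2 f(x^*)$ positive definite, so $K^*$ is well-defined and satisfies $K^*\nabla^2 f(x^*) = I$ \emph{exactly}. Writing $I - K(t)H(t) = (K^* - K(t))\nabla^2 f(x^*) + K(t)\big(\nabla^2 f(x^*) - H(t)\big)$ and taking norms gives
\[
\norm{I - K(t)H(t)} \leq \norm{K(t) - K^*}\,\norm{\nabla^2 f(x^*)} + \norm{K(t)}\,\norm{\nabla^2 f(x^*) - H(t)}.
\]
I would control the last factor by Lipschitz continuity of the Hessian (Assumption~\ref{assump_3}): $\norm{\nabla^2 f(x^*) - H(t)} \leq \int_0^1 \gamma s\,\norm{x(t) - x^*}\,ds = \tfrac{\gamma}{2}\norm{x(t) - x^*}$, which vanishes because Theorem~\ref{thm:conv}---whose hypotheses hold here with $\beta = 0$, as $K^*$ remains well-defined under strong convexity---already guarantees $x(t) \to x^*$ linearly. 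The factor $\norm{K(t)}$ stays bounded once $K(t) \to K^*$ is established.

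The crux, and the step I expect to be the main obstacle, is establishing $\norm{K(t) - K^*} \to 0$, because the pre-conditioner recursion~\eqref{eqn:kcol_update} is chasing a \emph{moving} target: summing~\eqref{eqn:rij} over $i$ with $\beta = 0$ gives $k_j(t+1) = \big(I - \alpha(t)\nabla^2 f(x(t))\big)k_j(t) + \alpha(t)e_j$, a fixed-point iteration whose fixed point $(\nabla^2 f(x(t)))^{-1}$ drifts as $x(t)$ moves. I would couple the contraction supplied by $\rho(t) \in [0,1)$ from Lemma~\ref{lem:rho} with the drift of this target, bounding the drift through the Lipschitz Hessian and the geometric decay of $\norm{x(t) - x^*}$ furnished by Theorem~\ref{thm:conv}; this is exactly the joint $(x,K)$ recursion underlying the proof of Theorem~\ref{thm:conv}, which I would reuse to conclude $\norm{K(t) - K^*} \to 0$.

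Combining the two vanishing contributions yields $\norm{I - K(t)H(t)} \to 0$, and hence the superlinear limit~\eqref{eqn:x_superlinear}. It is worth noting why $\beta = 0$ is essential: for $\beta > 0$ one has $K^*\nabla^2 f(x^*) = (\nabla^2 f(x^*) + \beta I)^{-1}\nabla^2 f(x^*) \neq I$, leaving a residual of order $\beta$ in $\norm{I - K(t)H(t)}$ that floors the contraction factor away from zero and produces only the linear rate of Theorem~\ref{thm:conv}. Setting $\beta = 0$ removes this residual, so the contraction factor collapses to zero and the convergence becomes superlinear.
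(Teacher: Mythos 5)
Your proposal is correct and takes essentially the same route as the paper's own proof: both reduce the ratio $\norm{x(t+1)-x^*}/\norm{x(t)-x^*}$ to the sum of two vanishing terms---one of order $\norm{x(t)-x^*}$ obtained from the fundamental theorem of calculus plus the Lipschitz Hessian (Assumption~\ref{assump_3}), and one of order $\norm{K(t)-K^*}$---and both dispose of the latter by reusing the contraction-plus-drift recursion for $\widetilde{K}(t)$ already derived in Lemma~\ref{lem:rho} and the proof of Theorem~\ref{thm:conv}, with Theorem~\ref{thm:conv} (valid at $\beta=0$ under strong convexity) supplying the geometric decay of $\norm{x(t)-x^*}$. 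Your factorization $x(t+1)-x^* = \left(I - K(t)H(t)\right)\left(x(t)-x^*\right)$ and the subsequent splitting are only a cosmetic rearrangement of the paper's inequality~\eqref{eqn:zt_kt}, so no substantive difference remains.
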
 

Proof of Theorem~\ref{thm:superlinear} is deferred to Appendix~\ref{prf:thm2}.

\section{Experimental results}
\label{sec:exp}

This section presents our results on three different experiments, validating the convergence of our proposed algorithm on real-world problems and its comparison with related methods.  

\subsection{Distributed noisy quadratic model}
\label{sub:nqm}

In the first experiment, we implement our proposed algorithm for distributively solving~\eqref{eqn:opt_1} in the case of the noisy quadratic model (NQM)~\cite{zhang2019algorithmic}, which has been shown to agree with the results of training real neural networks. The noisy quadratic model of neural networks consists of a quadratic aggregate cost function $f(x) =\frac{1}{2} x^T H x, ~ x \in \R^d$ where $H$ is a $(d\times d)$-dimensional diagonal matrix whose $i$-th element in the diagonal is $\frac{1}{i}$, and each gradient query is corrupted with independent and identically distributed noise of zero mean and the diagonal covariance matrix $H$. In the experiments, we choose $d=10^4$, which is also the condition number of the Hessian $H$. The distributed implementation of the aforementioned noisy quadratic model is set up as follows. The data points represented by the rows of the matrix $H$ are divided amongst $m=10$ agents. Thus, each agent $1,\ldots,10$ has a data matrix of dimension $10^3 \times 10^4$. 
Since the Hessian matrix $H$ in the noisy quadratic model is positive definite, the optimization problem~\eqref{eqn:opt_1} has a unique solution $x^* = 0_d$.

\begin{table}[htb!]
  \caption{Comparisons between the number of iterations required by different algorithms to attain the specified values for the relative estimation errors $\epsilon_{tol}$.}
  \label{tab:time_comp}
  \centering
  \begin{tabular}{|C{2.2cm}||C{1cm}|C{1cm}|C{1cm}|C{1cm}|C{1cm}|C{1.2cm}|C{1.5cm}|}
    \toprule
    Dataset & $\epsilon_{tol}$ & IPG & GD & NAG & HBM & Adam & BFGS \\
    \midrule
    \midrule
    Noisy Quadratic & $10^{-3}$ & \cellcolor{Gray} $242$ & $>10^4$ & $>10^4$ & $>10^4$ & $>10^4$ & unstable after $245$ \\
    \midrule
    MNIST (deterministic) & $0$ & \cellcolor{Gray} $214$ & $>10^4$ & $486$ & $462$ & $851$ & $39$ \\
    \midrule
    CIFAR-10 (deterministic) & $0$ & \cellcolor{Gray} $196$ & $>10^4$ & $627$ & $634$ & $3750$ & $79$ \\
    \bottomrule
  \end{tabular}
\end{table}

We compare the performance of our proposed IPG algorithm with other algorithms when implemented on the above NQM in the distributed architecture. Specifically, these algorithms are the distributed versions of gradient-descent (GD)~\cite{bertsekas1989parallel}, Nesterov's accelerated gradient-descent (NAG)~\cite{nesterov27method}, heavy-ball method (HBM)~\cite{polyak1964some}, adaptive momentum estimation (Adam)~\cite{kingma2014adam}, and BFGS method~\cite{kelley1999iterative}. 

The parameters of the respective distributed algorithms are selected such that each of these methods converges in a fewer number of iterations. Specifically, these parameters are selected as described below. For the GD, NAG, and HBM methods, the algorithm's parameters are set such that each of these methods achieves the optimal (smallest) rate of convergence. The specific definition of these optimal parameters can be found in~\cite{lessard2016analysis}. Since the Hessian of the noisy quadratic model is positive definite, Assumption~\ref{assump_5} holds. So, we set the parameter $\beta=0$ for the IPG method. The optimal convergence rate of the linear version of the proposed IPG method is obtained when $\alpha = \frac{2}{\lambda_1 + \lambda_d}$~\cite{chakrabarti2020iterative}. Here, $\lambda_1=1$ and $\lambda_d=\frac{1}{d}$ respectively denote the largest and the smallest eigenvalue of $H$. We find that the IPG method implemented for the NQM converges fastest when the parameter $\alpha$ is set similarly as $\alpha(t) = \frac{2}{\lambda_1 + \lambda_d}$. The step-size parameter $\alpha(t)$ of Adam is selected from the set $\{c,\frac{c}{\sqrt{t}},\frac{c}{t}\}$ where $c$ is from the set $\{0.01, 0.05, 0.1, 0.5, 1, 2\}$. The other parameters of Adam are set at their usual values of $\beta_1 = 0.9$, $\beta_2 = 0.999$, and $\epsilon = 10^{-8}$. The step-size of the BFGS method is obtained using the backtrack routine. The best parameter combinations from above are reported in the supplemental material.

The initial estimate $x(0)$ for these algorithms is randomly drawn from the standard normal distribution. The initial pre-conditioner matrix $K(0)$ for the IPG algorithm is the zero matrix of dimension $(d \times d)$. The initial approximation of the Hessian matrix $B(0)$ for the BFGS algorithm is the identity matrix of dimension $(d \times d)$.

We compare the number of iterations needed by these algorithms to reach a {\em relative estimation error} defined as $\epsilon_{tol} = \frac{\norm{x(t) - x^*}}{\norm{x(0) - x^*}}$. The results are recorded in Table~\ref{tab:time_comp} and Figure~\ref{fig:nqm}, from which we find that the proposed IPG algorithm converges fastest among the distributed algorithms implemented on the NQM.

\begin{table}[h]
  \caption{Comparisons between the number of iterations required by different algorithms to attain the specified values for relative error in estimated cost $\epsilon_{tol}$ when subjected to noise.}
  \label{tab:noise_time}
  \centering
  \begin{tabular}{|C{2.3cm}||C{1.2cm}|C{1cm}|C{1cm}|C{1cm}|C{1cm}|C{1.2cm}|C{1.2cm}|}
    \toprule
    Dataset & $\epsilon_{tol}$ & IPG & GD & NAG & HBM & Adam & BFGS \\
    \midrule
    \midrule
    MNIST (process noise) & $6e{-8}$ & $216$ & $>10^4$ & $>10^4$ & $532$ & $878$ & \cellcolor{Gray} $43$ \\
    \midrule
    MNIST (mini-batch) & $2e{-3}$ & \cellcolor{Gray} $737$ & $>10^4$ & $>10^4$ & $>10^4$ & $>10^4$ & $>10^4$ \\
    \midrule
    CIFAR-10 (process noise) & $4e{-6}$ & \cellcolor{Gray} $289$ & $>10^4$ & $>10^4$ & $350$ & $1191$ & $>10^4$ \\
    \midrule
    CIFAR-10 (mini-batch) & $2e{-3}$ & \cellcolor{Gray} $1960$ & $>10^4$ & $3151$ & $>10^4$ & $>10^4$ & $>10^4$ \\
    \bottomrule
  \end{tabular}
\end{table}

\begin{table}
  \caption{Comparisons between the relative final error $sse$ in estimated cost function obtained by different algorithms when subjected to noise.}
  \label{tab:noise_sse}
  \centering
  \begin{tabular}{|C{2.3cm}||C{1.2cm}|C{1cm}|C{1cm}|C{1cm}|C{1.2cm}|C{1.2cm}|}
    \toprule
    Dataset & IPG & GD & NAG & HBM & Adam & BFGS \\
    \midrule
    \midrule
    MNIST (process noise) & $6e{-8}$ & $6e{-8}$ & $6 e{-8}$ & $6e{-8}$ & \cellcolor{Gray} $3e{-8}$ & $\infty$ \\
    \midrule
    MNIST (mini-batch) & \cellcolor{Gray} $2e{-3}$ & $3e{-3}$ & $5e{-3}$ & $1e{-2}$ & $1e{-1}$ & $1e{-1}$ \\
    \midrule
    CIFAR-10 (process noise) & $4e{-6}$ & $3e{-6}$ & $1e{-5}$ & \cellcolor{Gray} $0$ & $2e{-8}$ & $\infty$ \\
    \midrule
    CIFAR-10 (mini-batch) & $2e{-3}$ & $2e{-2}$ & \cellcolor{Gray} $1e{-3}$ & $8e{-3}$ & $3e{-3}$ & $\infty$ \\
    \bottomrule
  \end{tabular}
\end{table}

\begin{figure}
     \centering
     \begin{subfigure}[b]{0.49\textwidth}
         \centering
         \includegraphics[width=\textwidth]{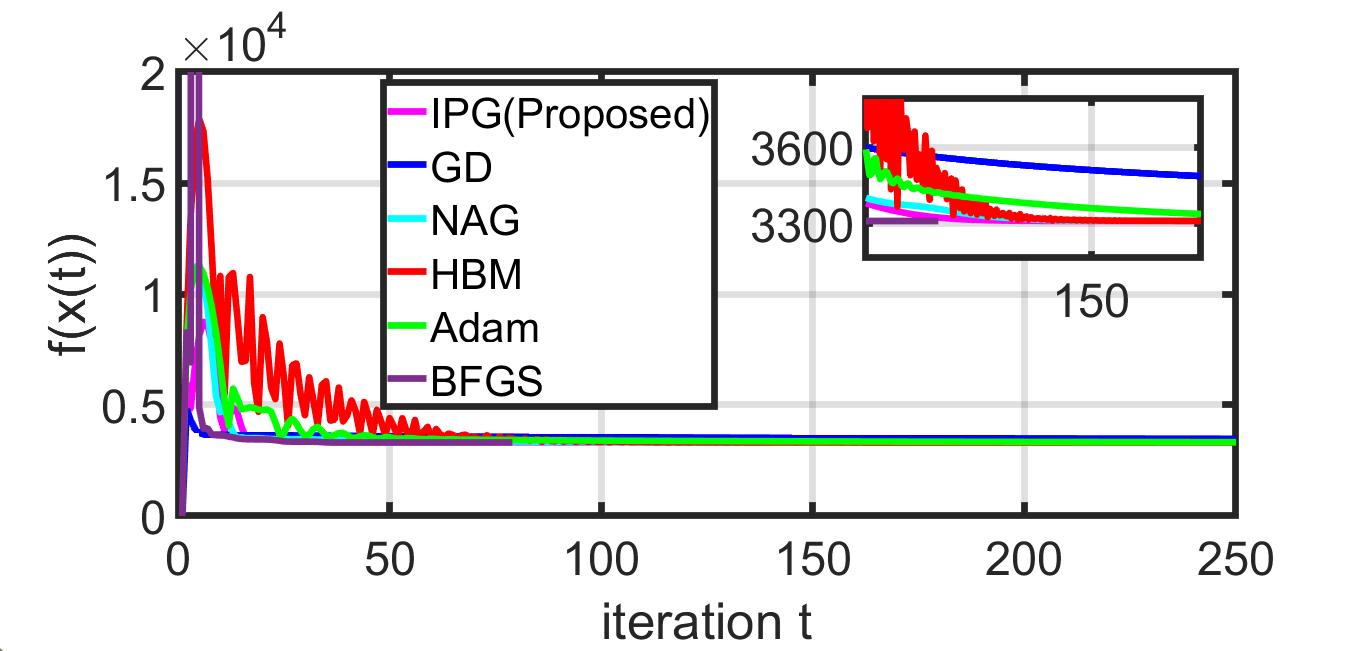}
         \caption{MNIST: full-batch}
         \label{fig:mnist_det}
     \end{subfigure}
     \hfill
     \begin{subfigure}[b]{0.49\textwidth}
         \centering
         \includegraphics[width=\textwidth]{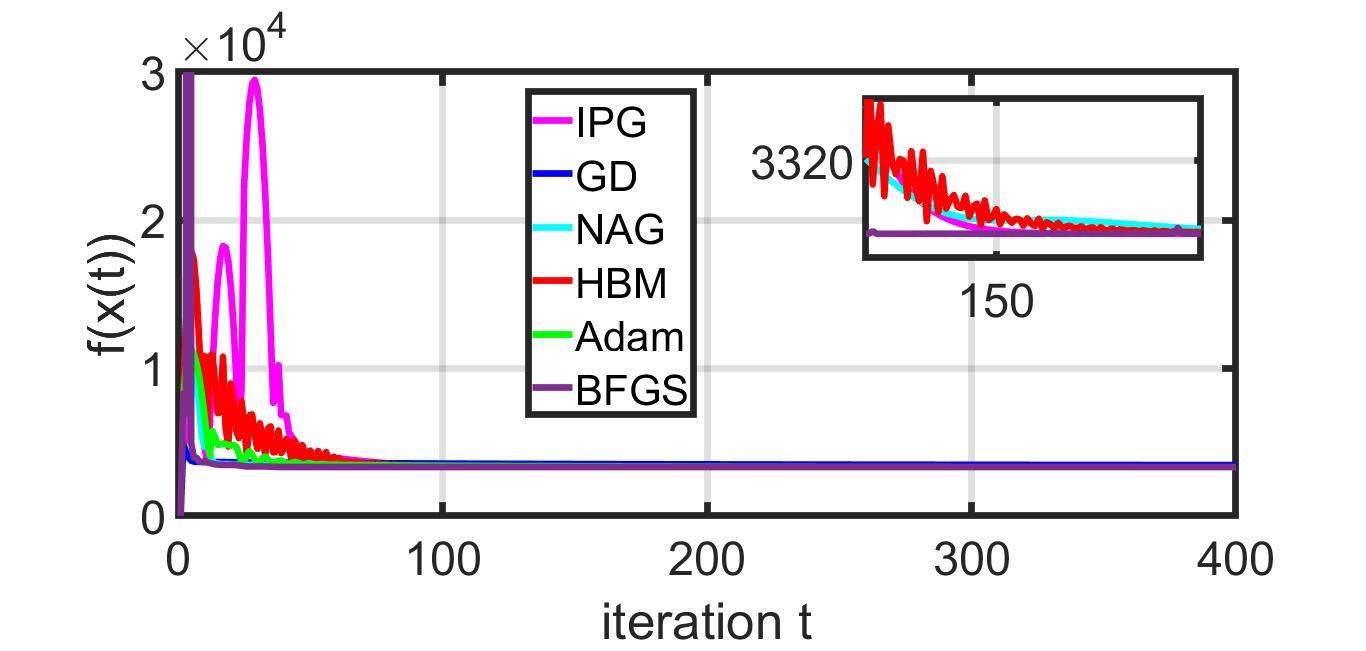}
         \caption{MNIST: full-batch with process noise}
         \label{fig:mnist_noise}
     \end{subfigure}
     \hfill
     \begin{subfigure}[b]{0.49\textwidth}
         \centering
         \includegraphics[width=\textwidth]{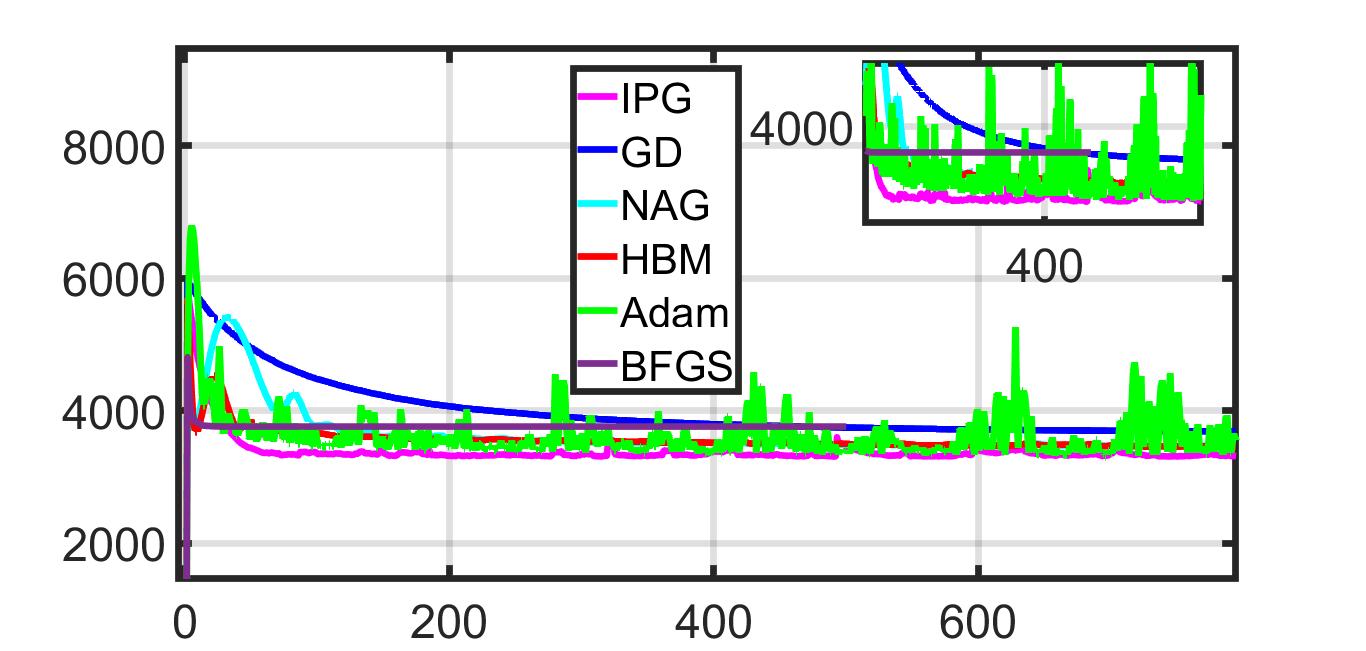}
         \caption{MNIST: mini-batch}
         \label{fig:mnist_mini}
     \end{subfigure}
     \begin{subfigure}[b]{0.49\textwidth}
         \centering
         \includegraphics[width=\textwidth]{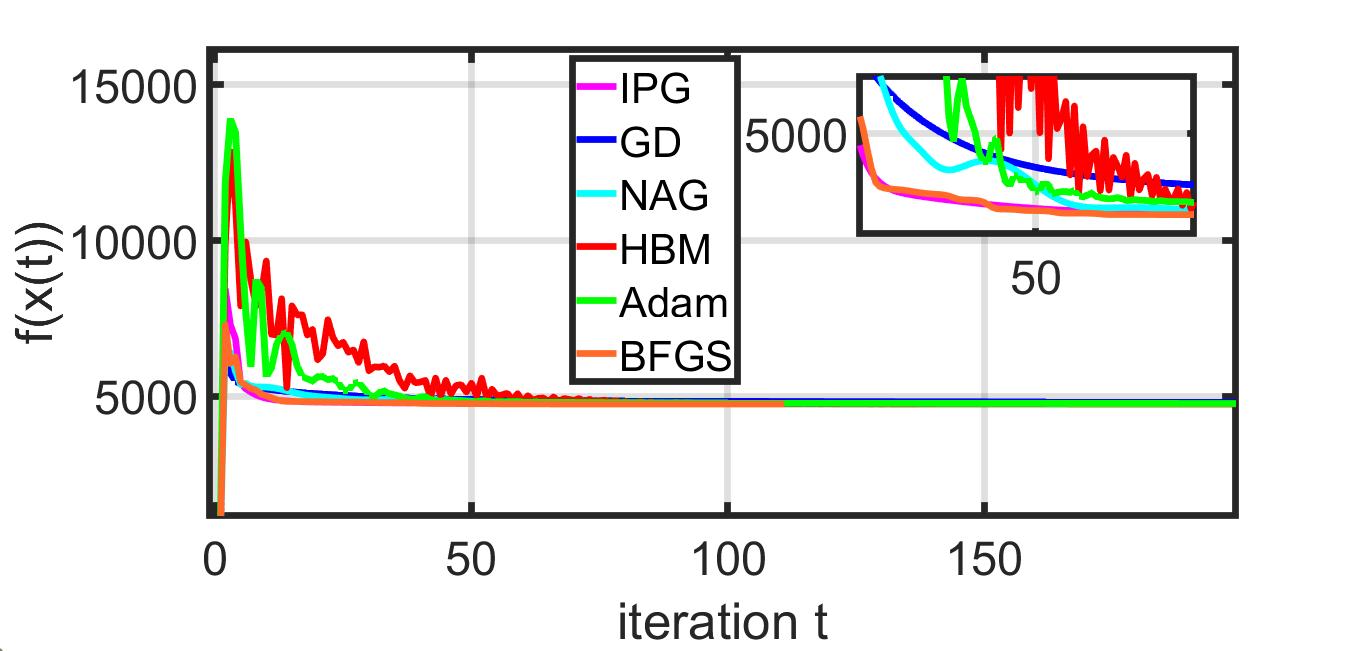}
         \caption{CIFAR-10: full-batch}
         \label{fig:cifar_det}
     \end{subfigure}
     \begin{subfigure}[b]{0.49\textwidth}
         \centering
         \includegraphics[width=\textwidth]{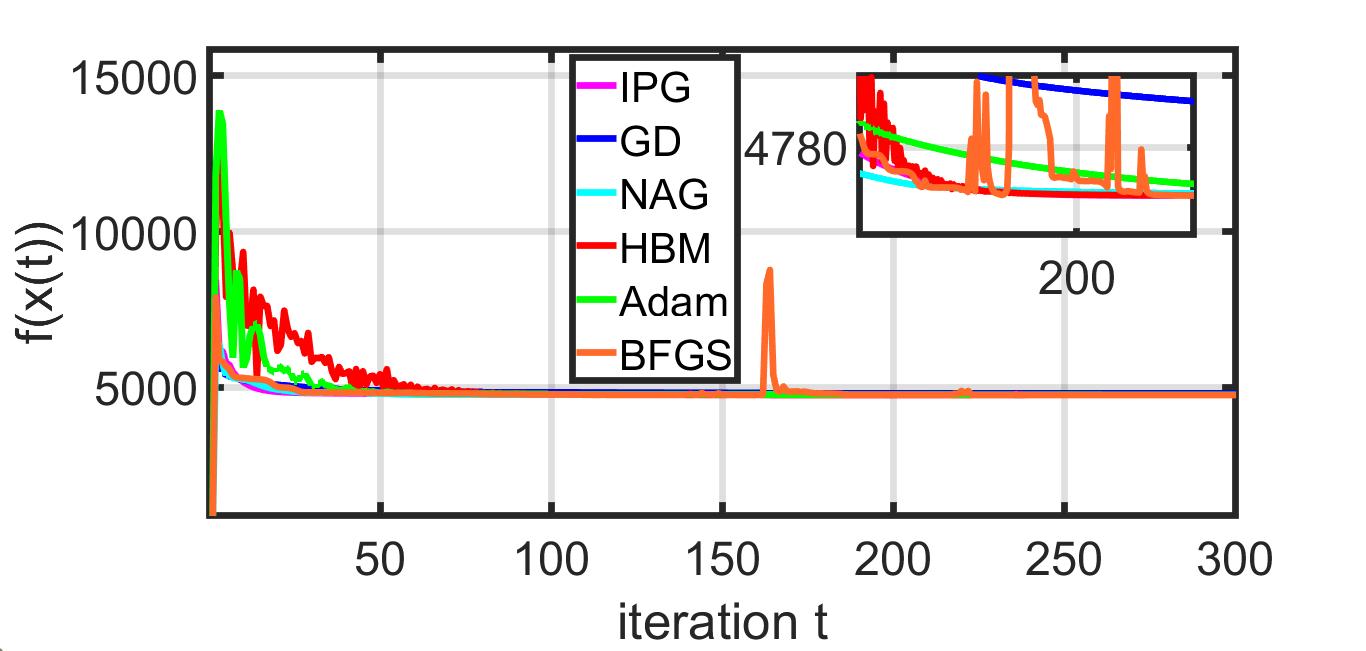}
         \caption{CIFAR-10: full-batch with process noise}
         \label{fig:cifar_noise}
     \end{subfigure}
     \begin{subfigure}[b]{0.49\textwidth}
         \centering
         \includegraphics[width=\textwidth]{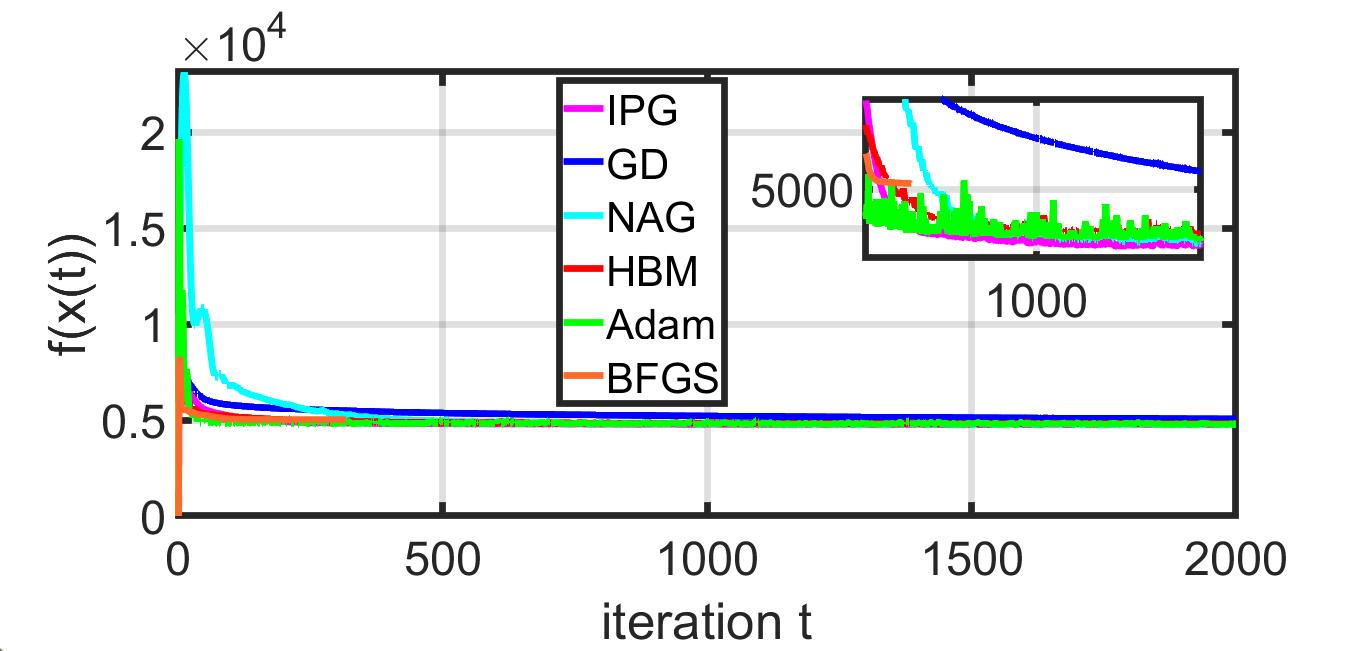}
         \caption{CIFAR-10: mini-batch}
         \label{fig:cifar_mini}
     \end{subfigure}
        \caption{Estimated aggregate cost $f(x(t))$ of different algorithms represented by different colors. For all the algorithms, $\{x_i(0) \in \mathcal{N}(0,0.1), ~ i=1,\ldots,d\}$. Additionally, $K(0) = O_{d \times d}$ for IPG and $B(0) = I_{d \times d}$ for BFGS. The other parameters are enlisted in the supplemental material.}
        \label{fig:log_ref}
\end{figure}

\subsection{Distributed logistic regression}
\label{sub:log_reg}

In the second and third experiments, we implement the proposed algorithm for distributively solving the logistic regression problem, respectively on the ``MNIST''~\cite{MNIST} and ``CIFAR-10''~\cite{krizhevsky2009learning} datasets.

From the training examples of the ``MNIST'' dataset, we select $10^4$ arbitrary instances labeled as either the digit one or the digit five. For each instance, we calculate two quantities, namely the average intensity and the average symmetry of the image~\cite{abu2012learning}. Let the column vectors $a_1$ and $a_2$ respectively denote the average intensity and the average symmetry of those $10^4$ instances. These two features are then mapped to a second-order polynomial space. Then, our input data matrix before pre-processing is $\begin{bmatrix} a_1 & a_2 & a_1.^2 & a_1.*a_2 & a_2.^2 \end{bmatrix}$. Here, $(.*)$ represents element-wise multiplication and $(.^2)$ represents element-wise squares. This raw data matrix is then pre-processed as follows. Each column is shifted by the mean value of the corresponding column and then divided by the standard deviation of that column. Finally, a $10^4$-dimensional column vector of unity is appended to this pre-processed matrix. This is our final input matrix $A$ of dimension $(10^4 \times 6)$. The collective data points $(A,B)$ are then distributed among $m=10$ agents, in the manner already described in Section~\ref{sub:nqm}.

Similarly, from the training examples of the ``CIFAR-10'' dataset, we select $10^4$ instances labeled as either ``airplane'' or ``automobile''. For each instance, we calculate six quantities, namely the average intensity and the average symmetry of the image for each of the three pixel-colors. Let the column vector $a_1,\ldots,a_6$ respectively denote these six features of all $10^4$ instances. We apply a feature transform so that our input matrix before pre-processing is $\begin{bmatrix} a_1 & \ldots a_6 & a_1.^2 & \ldots a_6.^2 \end{bmatrix}$. The pre-processing step and splitting the data among agents is the same as described above.

We compare the performance of the proposed IPG method with other algorithms for distributively solving the logistic regression problem on the ``MNIST'' and ``CIFAR-10'' datasets. As in Section~\ref{sub:nqm}, the algorithm parameters are selected such that each of these methods converges in a fewer number of iterations. Specifically, the parameter selection is described below. The best parameter combinations, for which the respective algorithms converge in a fewer number of iterations, are then reported for each dataset in the supplemental material. The algorithms are initialized as described in Section~\ref{sub:nqm}.

\paragraph{IPG:}

The parameter $\alpha(t)$ is selected from the set $\{c\times10^{-3},c\times10^{-4}\}$ where $c$ is from the set $\{1,2,5\}$. The parameter $\delta$ is chosen from the set $\{1,0.1,0.05\}$ and $\beta$ from the set $\{0,0.1,1\}$.

\paragraph{GD:}

The GD algorithm has only one parameter $\alpha$, which is selected from the set $\{c\times10^{-3},c\times10^{-4}\}$ where $c$ is from the set $\{1,2,5\}$.

\paragraph{NAG and HBM:}

The parameter $\alpha$ is selected from the set $\{c\times10^{-3},c\times10^{-4}\}$ where $c$ is from the set $\{1,2,3,5\}$. The parameter $\beta$ is selected from the set $\{0.91,0.92,\ldots,0.99\}$.

\paragraph{Adam:}

The parameters of Adam are selected from the set described in Section~\ref{fig:nqm}.

\paragraph{BFGS:}

The step-size $\alpha(t)$ is either obtained following the backtrack routine or chosen from the set $\{c\times10^{-p}\}$ where $c$ is from the set $\{1,2,5\}$ and $p$ is from $\{2,3,4,5\}$.

For each dataset, we implement the distributed algorithms in three different settings as follows.

\paragraph{Full-batch or deterministic:}

In this setting, we compare the number of iterations needed by different algorithms to reach a minimum point. Let $f^*$ denote the value of aggregate cost function at this minimum point. The results are recorded in Table~\ref{tab:time_comp}, Figure~\ref{fig:mnist_det}, and Figure~\ref{fig:cifar_det}. We observe that only the BFGS algorithm converges in fewer iterations than IPG.

\paragraph{Full-batch with process noise:}

In this setting, we add random process noise to each iterated variable of the respective algorithms. Specifically, the additive noise is {\em uniformly} distributed within $(0,2.3\times 10^{-4})$ and $(0,10^{-4})$, respectively for the MNIST and CIFAR-10 datasets.
We compare (i) the number of iterations needed to reach a {\em relative estimated cost} defined as $\epsilon_{tol} = \frac{f(x(t)) - f^*}{f^*}$, and (ii) the {\em final error in aggregate cost}, defined as $sse = \lim_{t\to \infty} \frac{f(x(t)) - f^*}{f^*}$, obtained by an algorithm. For the criteria-(i), each iterative algorithm is run (ref. Figure~\ref{fig:mnist_noise} and Figure~\ref{fig:cifar_noise}) until its relative estimated cost does not exceed $\epsilon_{tol}$ over a period of $10$ consecutive iterations, and the smallest such iteration is reported in Table~\ref{tab:noise_time}. For the criteria-(ii), each iterative algorithm is run until the changes in its subsequent estimated cost is less than $10^{-4}$ over $50$ consecutive iterations, and the final relative error in estimated cost is reported in Table~\ref{tab:noise_sse}.
For the MNIST dataset, we observe that only the BFGS algorithm reaches $\epsilon_{tol}$ in fewer iterations compared to IPG (ref. Table~\ref{tab:noise_time}). However, the BFGS method diverges after $83$ iterations. The final estimated cost for IPG is comparable to the other algorithms, except for BFGS, which diverges (ref. Table~\ref{tab:noise_sse}). For the CIFAR-10 dataset, the IPG algorithm needs the least iterations (ref. Table~\ref{tab:noise_time}). However, its final cost is slightly larger than HBM and Adam and comparable to GD (ref. Table~\ref{tab:noise_sse}).

\begin{table}
  \caption{Comparisons between the test error obtained by training with different algorithms.}
  \label{tab:test}
  \centering
  \begin{tabular}{|C{2.3cm}||C{1.2cm}|C{1.2cm}|C{1.2cm}|C{1.2cm}|C{1.5cm}|C{1.2cm}|}
    \toprule
    Dataset & IPG & GD & NAG & HBM & Adam & BFGS \\
    \midrule
    \midrule
    MNIST (process noise) & $0.13$ & $0.13$ & $0.13$ & $0.13$ & $0.13$ & N/A \\
    \midrule
    MNIST (mini-batch) & $0.14$ & $0.14$ & $0.14$ & $0.14$ & $0.14$ & $0.15$ \\
    \midrule
    CIFAR-10 (process noise) & $0.20$ & $0.20$ & $0.20$ & $0.21$ & $0.21$ & N/A \\
    \midrule
    CIFAR-10 (mini-batch) & $0.21$ & $0.21$ & $0.21$ & $0.21$ & $0.21$ & N/A \\
    \bottomrule
  \end{tabular}
\end{table}

\paragraph{Mini-batch:}

Each agent computes its local gradients based on $10$ randomly selected local data points at each iteration in this setting. As there are $m=10$ agents, the server updates the estimate based on an effective batch size of $100$.
We compare the algorithms based on criteria-(i) and -(ii) described in the previous setting. From Table~\ref{tab:noise_time}, the IPG algorithm requires the least number of iterations to reach $\epsilon_{tol}$ for both the datasets. For the MNIST dataset, the IPG algorithm has the smallest final estimated cost (ref. Table~\ref{tab:noise_sse}). For the CIFAR-10 dataset, only NAG has a smaller final cost than IPG (ref. Table~\ref{tab:noise_sse}).

We compare the performance of the algorithms on the respective test datasets (ref. Table~\ref{tab:test}). We observe that the test error of the logistic regression model trained with the proposed IPG algorithm is comparable with the other distributed algorithm, including the GD method. Thus, the IPG algorithm results in faster training without degrading the generalization performance.

\section{Summary}
\label{sec:summary}

We have proposed an adaptive gradient-descent algorithm for distributed learning without requiring the agents to reveal their local data points. The key ingredient of our proposed algorithm is an iterative pre-conditioning technique, which provably improves the convergence rate over the existing adaptive gradient methods in the distributed architecture. We have empirically shown our algorithm's faster convergence, and comparable generalization performance, when compared to the state-of-the-art distributed methods. In this work, we have assumed the distributed network to be synchronous. Our theoretical results have been derived for the full-batch settings and convex cost functions. However, through experiments we have also shown the efficacy of our algorithm in stochastic settings and in training of non-convex neural network via a noisy quadratic model.

\newpage
\bibliography{refs}

\begin{thebibliography}{29}
\providecommand{\natexlab}[1]{#1}
\providecommand{\url}[1]{\texttt{#1}}
\expandafter\ifx\csname urlstyle\endcsname\relax
  \providecommand{\doi}[1]{doi: #1}\else
  \providecommand{\doi}{doi: \begingroup \urlstyle{rm}\Url}\fi

\bibitem[Bertsekas and Tsitsiklis(1989)]{bertsekas1989parallel}
Dimitri~P Bertsekas and John~N Tsitsiklis.
\newblock \emph{Parallel and distributed computation: numerical methods},
  volume~23.
\newblock Prentice hall Englewood Cliffs, NJ, 1989.

\bibitem[Nesterov(1983)]{nesterov27method}
Y~Nesterov.
\newblock A method of solving a convex programming problem with convergence
  rate {$O$}($1/k^2$).
\newblock \emph{Sov. Math. Doklady}, 27\penalty0 (2):\penalty0 372--376, 1983.

\bibitem[Polyak(1964)]{polyak1964some}
Boris~T Polyak.
\newblock Some methods of speeding up the convergence of iteration methods.
\newblock \emph{USSR Computational Mathematics and Mathematical Physics},
  4\penalty0 (5):\penalty0 1--17, 1964.

\bibitem[Kingma and Ba(2014)]{kingma2014adam}
Diederik~P Kingma and Jimmy Ba.
\newblock Adam: A method for stochastic optimization.
\newblock \emph{arXiv preprint arXiv:1412.6980}, 2014.

\bibitem[Kelley(1999)]{kelley1999iterative}
Carl~T Kelley.
\newblock \emph{Iterative methods for optimization}.
\newblock SIAM, 1999.

\bibitem[Duchi et~al.(2011)Duchi, Hazan, and Singer]{duchi2011adaptive}
John Duchi, Elad Hazan, and Yoram Singer.
\newblock Adaptive subgradient methods for online learning and stochastic
  optimization.
\newblock \emph{Journal of machine learning research}, 12\penalty0 (7), 2011.

\bibitem[Zeiler(2012)]{zeiler2012adadelta}
Matthew~D Zeiler.
\newblock Adadelta: An adaptive learning rate method.
\newblock \emph{arXiv preprint arXiv:1212.5701}, 2012.

\bibitem[Dozat(2016)]{dozat2016incorporating}
Timothy Dozat.
\newblock Incorporating nesterov momentum into {A}dam.
\newblock 2016.

\bibitem[Fessler(2021)]{fessler2008image}
Jeffrey~A Fessler.
\newblock Image reconstruction: Algorithms and analysis.
\newblock \url{http://web.eecs.umich.edu/~fessler/book/c-opt.pdf}, 2021.
\newblock [Online book draft; accessed 27-May-2021].

\bibitem[Wu et~al.(2016)Wu, Schuster, Chen, Le, Norouzi, Macherey, Krikun, Cao,
  Gao, Macherey, et~al.]{wu2016google}
Yonghui Wu, Mike Schuster, Zhifeng Chen, Quoc~V Le, Mohammad Norouzi, Wolfgang
  Macherey, Maxim Krikun, Yuan Cao, Qin Gao, Klaus Macherey, et~al.
\newblock Google's neural machine translation system: Bridging the gap between
  human and machine translation.
\newblock \emph{arXiv preprint arXiv:1609.08144}, 2016.

\bibitem[Radford et~al.(2015)Radford, Metz, and
  Chintala]{radford2015unsupervised}
Alec Radford, Luke Metz, and Soumith Chintala.
\newblock Unsupervised representation learning with deep convolutional
  generative adversarial networks.
\newblock \emph{arXiv preprint arXiv:1511.06434}, 2015.

\bibitem[Peters et~al.(2018)Peters, Neumann, Iyyer, Gardner, Clark, Lee, and
  Zettlemoyer]{peters2018deep}
Matthew~E Peters, Mark Neumann, Mohit Iyyer, Matt Gardner, Christopher Clark,
  Kenton Lee, and Luke Zettlemoyer.
\newblock Deep contextualized word representations.
\newblock \emph{arXiv preprint arXiv:1802.05365}, 2018.

\bibitem[Tong et~al.(2019)Tong, Liang, and Bi]{tong2019calibrating}
Qianqian Tong, Guannan Liang, and Jinbo Bi.
\newblock Calibrating the adaptive learning rate to improve convergence of
  {ADAM}.
\newblock \emph{arXiv preprint arXiv:1908.00700}, 2019.

\bibitem[Su et~al.(2014)Su, Boyd, and Candes]{su2014differential}
Weijie Su, Stephen Boyd, and Emmanuel Candes.
\newblock A differential equation for modeling {N}esterov’s accelerated
  gradient method: Theory and insights.
\newblock \emph{Advances in neural information processing systems},
  27:\penalty0 2510--2518, 2014.

\bibitem[Zhang et~al.(2019{\natexlab{a}})Zhang, Li, Nado, Martens, Sachdeva,
  Dahl, Shallue, and Grosse]{zhang2019algorithmic}
Guodong Zhang, Lala Li, Zachary Nado, James Martens, Sushant Sachdeva, George~E
  Dahl, Christopher~J Shallue, and Roger Grosse.
\newblock Which algorithmic choices matter at which batch sizes? {I}nsights
  from a noisy quadratic model.
\newblock \emph{arXiv preprint arXiv:1907.04164}, 2019{\natexlab{a}}.

\bibitem[Chakrabarti et~al.(2020)Chakrabarti, Gupta, and
  Chopra]{chakrabarti2020iterative}
Kushal Chakrabarti, Nirupam Gupta, and Nikhil Chopra.
\newblock Iterative pre-conditioning to expedite the gradient-descent method.
\newblock In \emph{2020 American Control Conference (ACC)}, pages 3977--3982.
  IEEE, 2020.

\bibitem[Lucas et~al.(2021)Lucas, Bae, Zhang, Fort, Zemel, and
  Grosse]{lucas2021analyzing}
James Lucas, Juhan Bae, Michael~R Zhang, Stanislav Fort, Richard Zemel, and
  Roger Grosse.
\newblock Analyzing monotonic linear interpolation in neural network loss
  landscapes.
\newblock \emph{arXiv preprint arXiv:2104.11044}, 2021.

\bibitem[Schaul et~al.(2013)Schaul, Zhang, and LeCun]{schaul2013no}
Tom Schaul, Sixin Zhang, and Yann LeCun.
\newblock No more pesky learning rates.
\newblock In \emph{International Conference on Machine Learning}, pages
  343--351. PMLR, 2013.

\bibitem[Jastrz{\k{e}}bski et~al.(2017)Jastrz{\k{e}}bski, Kenton, Arpit,
  Ballas, Fischer, Bengio, and Storkey]{jastrzkebski2017three}
Stanis{\l}aw Jastrz{\k{e}}bski, Zachary Kenton, Devansh Arpit, Nicolas Ballas,
  Asja Fischer, Yoshua Bengio, and Amos Storkey.
\newblock Three factors influencing minima in {SGD}.
\newblock \emph{arXiv preprint arXiv:1711.04623}, 2017.

\bibitem[Zhang et~al.(2019{\natexlab{b}})Zhang, Lucas, Hinton, and
  Ba]{zhang2019lookahead}
Michael~R Zhang, James Lucas, Geoffrey Hinton, and Jimmy Ba.
\newblock Lookahead optimizer: k steps forward, 1 step back.
\newblock \emph{arXiv preprint arXiv:1907.08610}, 2019{\natexlab{b}}.

\bibitem[Wu et~al.(2018)Wu, Ren, Liao, and Grosse]{wu2018understanding}
Yuhuai Wu, Mengye Ren, Renjie Liao, and Roger Grosse.
\newblock Understanding short-horizon bias in stochastic meta-optimization.
\newblock \emph{arXiv preprint arXiv:1803.02021}, 2018.

\bibitem[Wen et~al.(2019)Wen, Luk, Gazeau, Zhang, Chan, and
  Ba]{wen2019empirical}
Yeming Wen, Kevin Luk, Maxime Gazeau, Guodong Zhang, Harris Chan, and Jimmy Ba.
\newblock An empirical study of large-batch stochastic gradient descent with
  structured covariance noise.
\newblock \emph{arXiv e-prints}, pages arXiv--1902, 2019.

\bibitem[Mandt et~al.(2017)Mandt, Hoffman, and Blei]{mandt2017stochastic}
Stephan Mandt, Matthew~D Hoffman, and David~M Blei.
\newblock Stochastic gradient descent as approximate {B}ayesian inference.
\newblock \emph{arXiv preprint arXiv:1704.04289}, 2017.

\bibitem[Lee et~al.(2020)Lee, Xiao, Schoenholz, Bahri, Novak, Sohl-Dickstein,
  and Pennington]{lee2020wide}
Jaehoon Lee, Lechao Xiao, Samuel~S Schoenholz, Yasaman Bahri, Roman Novak,
  Jascha Sohl-Dickstein, and Jeffrey Pennington.
\newblock Wide neural networks of any depth evolve as linear models under
  gradient descent.
\newblock \emph{Journal of Statistical Mechanics: Theory and Experiment},
  2020\penalty0 (12):\penalty0 124002, 2020.

\bibitem[Nocedal and Wright(2006)]{nocedal2006numerical}
Jorge Nocedal and Stephen Wright.
\newblock \emph{Numerical optimization}.
\newblock Springer Science \& Business Media, 2006.

\bibitem[Lessard et~al.(2016)Lessard, Recht, and Packard]{lessard2016analysis}
Laurent Lessard, Benjamin Recht, and Andrew Packard.
\newblock Analysis and design of optimization algorithms via integral quadratic
  constraints.
\newblock \emph{SIAM Journal on Optimization}, 26\penalty0 (1):\penalty0
  57--95, 2016.

\bibitem[MNI(2018)]{MNIST}
{MNIST} in {CSV}.
\newblock \url{https://www.kaggle.com/oddrationale/mnist-in-csv}, 2018.
\newblock Accessed: 22-May-2021.

\bibitem[Krizhevsky et~al.(2009)Krizhevsky, Hinton,
  et~al.]{krizhevsky2009learning}
Alex Krizhevsky, Geoffrey Hinton, et~al.
\newblock Learning multiple layers of features from tiny images.
\newblock 2009.

\bibitem[Abu-Mostafa et~al.(2012)Abu-Mostafa, Magdon-Ismail, and
  Lin]{abu2012learning}
Yaser~S Abu-Mostafa, Malik Magdon-Ismail, and Hsuan-Tien Lin.
\newblock \emph{Learning from data}, volume~4.
\newblock AMLBook New York, NY, USA, 2012.

\end{thebibliography}

\newpage
\appendix
\section{Supplemental material}

\subsection{Algorithm parameters used in the experiments}
\begin{table}[h]
\caption{The parameters used in different algorithms.*}
  \label{tab:parameters}
  \centering
  \begin{tabular}{|m{1.5cm}||m{1.8cm}|m{0.7cm}|m{1.5cm}|m{1.5cm}|m{2cm}|m{2cm}|}
    \toprule
    \multirow{2}{*}{Dataset} & IPG & GD & NAG & HBM & Adam & BFGS \\
    \cline{2-7}
    ~ & $\alpha(t), \delta, \beta$ & $\alpha$ & $\alpha, \beta$ & $\alpha, \, \beta$ & $\alpha(t), \, \beta_1, \, \beta_2$, $\epsilon$ & $\alpha(t)$ \\
    \midrule
    \midrule
    Noisy Quadratic & $1.99, 1, 0$ & $1.99$ & $1.33, 0.97$ & $3.92, 0.96$ & $\frac{1}{t},  0.9, 0.999$, $1e{-8}$ & according to backtrack~\cite{kelley1999iterative} \\
    \midrule
    MNIST (deterministic) & $5e{-4}, 1,  0$ & $5e{-4}$ & $5e{-4}, 0.97$ & $1e{-3}, 0.94$ & $2, 0.9, 0.999$, $1e{-8}$ & $1e{-3}$
    \\
    \midrule
    MNIST (process noise) & $5e{-4}, 1, 0$ & $5e{-4}$ & $5e{-4}, 0.97$ & $1e{-3}, 0.94$ & $2,0.9, 0.999$, $1e{-8}$ & $1e{-3}$
    \\
    \midrule
    MNIST (mini-batch) & $1e{-4}, 0.05,  1$ & $1e{-4}$ & $5e{-4}, 0.97$ & $1e{-3},  0.95$ & $1, 0.9, 0.999$, $1e{-8}$ & $0.05$
    \\
    \midrule
    CIFAR-10 (deterministic) & $2e{-4},  1,  0$ & $2e{-4}$ & $1e{-4},0.95$ & $3e{-4}, 0.94$ & $\frac{1}{\sqrt{t}}, 0.9, 0.999$, $1e{-8}$ & $2e{-4}$\\
    \midrule
    CIFAR-10 (process noise) & $2e{-4}, 0.05,  0$ & $2e{-4}$ & $1e{-4}, 0.93$ & $3e{-4}, 0.94$ & $0.1, 0.9, 0.999$, $1e{-8}$ & $1e{-5}$
    \\
    \midrule
    CIFAR-10 (mini-batch) & $1e{-3},  0.05, 0$ & $2e{-4}$ & $2e{-4}, 0.95$ & $2e{-4}, 0.92$ & $\frac{1}{\sqrt{t}}, 0.9, 0.999$, $1e{-8}$ & according to backtrack
    \\
    \bottomrule
  \end{tabular}
\end{table}

\textit{* The argument behind selection of these specific parameter values has been described in Section~\ref{sub:nqm}, for the noisy quadratic model, and Section~\ref{sub:log_reg}, for the MNIST and CIFAR-10 datasets.}

\subsection{Proof of Lemma~\ref{lem:rho}}
\label{prf:lem1}

In this section, we prove the result in Lemma~\ref{lem:rho}. 

Recall the definition of $K^*$ from Section~\ref{sub:conv}. For each iteration $t \geq 0$, define the matrix
\begin{align}
    \widetilde{K}(t) = K(t) - K^*. \label{eqn:ktildedef}
\end{align}
Consider an arbitrary iteration $t \geq 0$. Upon substituting from~\eqref{eqn:rij} and~\eqref{eqn:kcol_update} in the definition~\eqref{eqn:ktildedef},
\begin{align*}
    \widetilde{K}(t+1) = & K(t) - \alpha(t) \left(\left(\nabla^2 f(x(t)) + \beta I\right) K(t) -I \right) - K^* \\
    = & K(t) - K^* - \alpha(t) \left(\left(\nabla^2 f(x(t)) + \beta I\right) K(t) - \left(\nabla^2 f(x(t)) + \beta I\right) K^* \right) \\
    & - \alpha(t) \left( \left(\nabla^2 f(x(t)) + \beta I\right) K^* - \left(\nabla^2 f(x^*) + \beta I\right) \left(\nabla^2 f(x^*) + \beta I\right)^{-1} \right).
\end{align*}
Upon substituting above from~\eqref{eqn:ktildedef} and the definition of $K^*$,
\begin{align*}
    \widetilde{K}(t+1) = & \left(I - \alpha(t) \left(\nabla^2 f(x(t)) + \beta I\right) \right) \widetilde{K}(t) - \alpha(t) \left(\nabla^2 f(x(t)) - \nabla^2 f(x^*) \right) K^*.
\end{align*}
Applying Cauchy-Schwartz inequality above,
\begin{align}
    \norm{\widetilde{K}(t+1)} \leq & \norm{I - \alpha(t) \left(\nabla^2 f(x(t)) + \beta I\right)} \norm{\widetilde{K}(t)} + \alpha(t) \norm{\nabla^2 f(x(t)) - \nabla^2 f(x^*)} \norm{K^*}. \label{eqn:norm_kt}
\end{align}
Under Assumption~\ref{assump_2}, each local cost function $f^i$ is convex, implying that the aggregate cost function $f = \sum_{i=1}^m f^i$ is convex. Thus, $\left(\nabla^2 f(x(t)) + \beta I\right)$ is positive definite for $\beta > 0$. It follows that, if $\alpha(t) < \frac{1}{\lambda_{max} \left[\nabla^2 f(x(t))\right] + \beta}$ then $\rho(t) = \norm{I - \alpha(t) \left(\nabla^2 f(x(t)) + \beta I\right)} < 1$. This proves the lemma.

\subsection{Proof of Theorem~\ref{thm:conv}}
\label{prf:thm1}

In this section, we present a formal proof of our main result in Theorem~\ref{thm:conv}. We proceed with the proof in three steps as follows. 

Before presenting the proof, we define the following notation. Recall the definition of the minimum points $X^*$ in~\eqref{eqn:opt_1}. For a minimum points $x^* \in X^*$ and each iteration $t \geq 0$, define the estimation error
\begin{align}
    z(t) = x(t) - x^*. \label{eqn:zdef}
\end{align}

\paragraph{Step 1:}

In this step, we derive an upper bound of $\norm{\widetilde{K}(t+1)}$, provided with the iterations from $0$ to $t$. In this step, we use~\eqref{eqn:norm_kt} from the proof of Lemma~\ref{lem:rho}.

Consider an arbitrary iteration $t \geq 0$. From Section~\ref{sub:conv}, recall that $\eta$ denotes the induced $2$-norm of the matrix $K^*$. Then,
\begin{align}
    \eta = \norm{K^*} = \norm{\left(\nabla^2 f(x^*) + \beta I\right)^{-1}} = \frac{1}{\lambda_{min} \left[\nabla^2 f(x^*)\right] + \beta}. \label{eqn:eta}
\end{align}
Under Assumption~\ref{assump_2} and~\ref{assump_4}, $\nabla^2 f(x^*)$ is positive definite. Then, $\lambda_{min} \left[\nabla^2 f(x^*)\right] > 0$, and $\eta < \frac{1}{\beta}$. 
From Section~\ref{sub:conv}, recall the definition $\rho = \sup_{t \geq 0} \rho(t)$. From Lemma~\ref{lem:rho} we have $0 \leq \rho < 1$.
Upon substituting from~\eqref{eqn:eta} and the definition of $\rho$ in~\eqref{eqn:norm_kt},
\begin{align}
    \norm{\widetilde{K}(t+1)} \leq & \rho \norm{\widetilde{K}(t)} + \eta ~ \alpha(t) \norm{\nabla^2 f(x(t)) - \nabla^2 f(x^*)}. \label{eqn:norm_kt_hess}
\end{align}
Under Assumption~\ref{assump_3}, $\norm{\nabla^2 f(x(t)) - \nabla^2 f(x^*)} \leq \gamma \norm{x(t) - x^*}$. Upon substituting above from the definition~\eqref{eqn:zdef}, $\norm{\nabla^2 f(x(t)) - \nabla^2 f(x^*)} \leq \gamma \norm{z(t)}$. Upon substituting from above in~\eqref{eqn:norm_kt_hess},
\begin{align*}
    \norm{\widetilde{K}(t+1)} \leq & \rho \norm{\widetilde{K}(t)} + \eta \gamma ~ \alpha(t) \norm{z(t)}.
\end{align*}
Iterating the above from $t$ to $0$ we have
\begin{align}
    \norm{\widetilde{K}(t+1)} \leq & \rho^{t+1} \norm{\widetilde{K}(0)} + \eta \gamma ~ \alpha(t) \left( \norm{z(t)} + \rho \norm{z(t-1)} + \ldots + \rho^t \norm{z(0)} \right). \label{eqn:kt_conv}
\end{align}

\paragraph{Step 2:}

In this step, we derive an upper bound on the estimation error $\norm{z(t+1)}$, provided iteration $t$.

Upon substituting from~\eqref{eqn:x_update} in~\eqref{eqn:zdef}, with the parameter $\delta = 1$,
\begin{align*}
    z(t+1) = & z(t) - K(t) \sum_{i=1}^m g^i(t).
\end{align*}
Upon substituting above from~\eqref{eqn:g_i} and the definition of aggregate cost function $f = \sum_{i=1}^m f^i$,
\begin{align*}
    z(t+1) = & z(t) - K(t) \nabla f(x(t)).
\end{align*}
Upon substituting above from the definition of $\widetilde{K}(t)$ in ~\eqref{eqn:ktildedef},
\begin{align*}
    z(t+1) = & z(t) - K^* \nabla f(x(t)) - \widetilde{K}(t) \nabla f(t) \\
    = & -K^* \left(\nabla f(x(t)) - \left(K^*\right)^{-1} z(t) \right) - \widetilde{K}(t) \nabla f(x(t)).
\end{align*}
Since $x^*$ is a minimum point of the aggregate cost function $f$ in~\eqref{eqn:opt_1}, the first order necessary condition states that $\nabla f(x^*) = 0_d$. Here, $0_d$ denotes the $d$-dimensional zero vector. Thus, the above equation can be rewritten as
\begin{align}
    z(t+1)
    = & -K^* \left(\nabla f(x(t)) - \nabla f(x^*) - \left(K^*\right)^{-1} z(t) \right) - \widetilde{K}(t) \nabla f(x(t)). \label{eqn:zt}
\end{align}
Using the fundamental theorem of calculus~\cite{kelley1999iterative},
\begin{align*}
    \nabla f(x(t)) - \nabla f(x^*) = (x(t)-x^*) \int_0^1 \nabla^2 f(y x(t) + (1-y)x^*) dy .
\end{align*}
From above and the definition of $K^* = \left(\nabla^2 f(x^*) + \beta I\right)^{-1}$ (see Section~\ref{sub:conv}), we have
\begin{align*}
    & \nabla f(x(t)) - \nabla f(x^*) - \left(K^*\right)^{-1} z(t) \\
    & =  (x(t)-x^*) \int_0^1 \nabla^2 f(y x(t) + (1-y)x^*) dy  - \left( \nabla^2 f(x^*) + \beta I \right) z(t) \\
    & = (x(t)-x^*) \int_0^1 \left(\nabla^2 f(y x(t) + (1-y)x^*) - \nabla^2 f(x^*) \right) dy  - \beta z(t).
\end{align*}
From~\eqref{eqn:zdef}, recall that $z(t) = x(t) - x^*$. Thus, from above we obtain that 
\begin{align*}
    & \norm{\nabla f(x(t)) - \nabla f(x^*) - \left(K^*\right)^{-1} z(t)} \\
    & \leq \norm{z(t)} \int_0^1 \norm{\nabla^2 f(y x(t) + (1-y)x^*) - \nabla^2 f(x^*)} dy  + \beta \norm{z(t)}.
\end{align*}
Under Assumption~\ref{assump_3}, $\norm{\nabla^2 f(y x(t) + (1-y)x^*) - \nabla^2 f(x^*)} \leq \gamma (1-y) \norm{z(t)}$. Thus, 
\begin{align*}
    \norm{\nabla f(x(t)) - \nabla f(x^*) - \left(K^*\right)^{-1} z(t)} & \leq \gamma \norm{z(t)}^2 \int_0^1 (1-y) dy  + \beta \norm{z(t)} \\ & = \frac{\gamma}{2}\norm{z(t)}^2 + \beta \norm{z(t)}.
\end{align*}
Upon using Cauchy-Schwartz inequality in~\eqref{eqn:zt}, and then substituting from above, we obtain that
\begin{align*}
    \norm{z(t+1)} \leq & \eta \frac{\gamma}{2}\norm{z(t)}^2 + \eta \beta \norm{z(t)} + \norm{\widetilde{K}(t)} \norm{\nabla f(x(t))}.
\end{align*}
Since $\nabla f(x^*) = 0_d$, the above can be rewritten as
\begin{align*}
    \norm{z(t+1)} \leq & \eta \frac{\gamma}{2}\norm{z(t)}^2 + \eta \beta \norm{z(t)} + \norm{\widetilde{K}(t)} \norm{\nabla f(x(t)) - \nabla f(x^*)}.
\end{align*}
Upon using the Lipschitz property~\eqref{eqn:lip_grad} in above,
\begin{align}
    \norm{z(t+1)} \leq & \eta \frac{\gamma}{2}\norm{z(t)}^2 + \eta \beta \norm{z(t)} + l \norm{\widetilde{K}(t)} \norm{z(t)}. \label{eqn:zt_kt}
\end{align}
Upon substituting above from~\eqref{eqn:kt_conv} in Step 1, we have
\begin{align}
    & \norm{z(t+1)} \leq \eta \frac{\gamma}{2}\norm{z(t)}^2 + \eta \beta \norm{z(t)} \nonumber \\
    & + l \norm{z(t)} \left( \rho^{t} \norm{\widetilde{K}(0)} + \eta \gamma ~ \alpha(t-1) \left( \norm{z(t-1)} + \rho \norm{z(t-2)} + \ldots + \rho^{t-1} \norm{z(0)} \right) \right). \label{eqn:zt_full}
\end{align}

\paragraph{Step 3:}

In this final step of the proof, we prove~\eqref{eqn:x_linear} which is a direct result of the following claim.

\begin{claim}
Given the conditions of Theorem~\ref{thm:conv}, for each iteration $t \geq 0$ the following statement holds true:
\begin{align}
    \norm{z(t+1)} \leq \frac{1}{\mu} \norm{z(t)} \, \text{ and } \norm{z(t)} < \frac{1}{\mu \eta \gamma}. \label{eqn:clm}
\end{align}
\end{claim}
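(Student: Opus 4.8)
The plan is to prove the compound statement \eqref{eqn:clm} by \emph{strong} induction on $t$, handling the two inequalities jointly since they feed into one another: the quadratic term in the recursion for $\norm{z(t+1)}$ is tamed by the uniform bound $\norm{z(t)} < \frac{1}{\mu\eta\gamma}$, while that uniform bound is itself a consequence of the contraction $\norm{z(t)} \leq \frac{1}{\mu}\norm{z(t-1)}$. For the base case $t=0$, I would read off the single-step recursion \eqref{eqn:zt_kt}, factor out $\norm{z(0)}$, and observe that the resulting bracket $\frac{\eta\gamma}{2}\norm{z(0)} + \eta\beta + l\norm{K(0)-K^*}$ is exactly the left-hand side of \eqref{eqn:init_cond}, hence at most $\frac{1}{2\mu} \leq \frac{1}{\mu}$; dropping the nonnegative terms $l\norm{K(0)-K^*}+\eta\beta$ from \eqref{eqn:init_cond} gives the uniform bound $\norm{z(0)} < \frac{1}{\mu\eta\gamma}$ (strict because $\beta>0$ forces $\eta\beta>0$).

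For the inductive step, assume \eqref{eqn:clm} holds for all indices $0,\ldots,t-1$. The uniform bound at $t$ is then immediate: chaining the contraction yields $\norm{z(t)} \leq \frac{1}{\mu}\norm{z(t-1)} < \frac{1}{\mu}\cdot\frac{1}{\mu\eta\gamma} < \frac{1}{\mu\eta\gamma}$, using $\mu>1$. The real work is the contraction at $t$. Starting from \eqref{eqn:zt_full}, I would factor out $\norm{z(t)}$ and bound the remaining bracket by $\frac{1}{\mu}$ term by term: the quadratic term contributes $\frac{\eta\gamma}{2}\norm{z(t)} < \frac{1}{2\mu}$ by the uniform bound just established; the term $l\rho^t\norm{\widetilde{K}(0)}$ is at most $l\norm{K(0)-K^*}$ since $\rho\in[0,1)$ by Lemma~\ref{lem:rho}; and $\eta\beta$ is left as is.

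The crux is the historical-sum term $l\eta\gamma\,\alpha(t-1)\sum_{j=0}^{t-1}\rho^j\norm{z(t-1-j)}$. Here I would insert the geometric decay $\norm{z(t-1-j)} \leq \mu^{-(t-1-j)}\norm{z(0)}$ supplied by the inductive hypothesis, giving $\sum_{j=0}^{t-1}\rho^j\norm{z(t-1-j)} \leq \norm{z(0)}\,\mu^{-(t-1)}\frac{1-(\mu\rho)^t}{1-\mu\rho}$, a finite geometric series that is controlled because $\mu\rho<1$ by \eqref{eqn:init_cond}. Substituting the bound $\alpha(t-1) < \frac{\mu^{t-1}(1-\mu\rho)}{2l(1-(\mu\rho)^t)}$ from \eqref{eqn:alpha_cond}, the factor $\mu^{t-1}$ cancels $\mu^{-(t-1)}$, the $(1-\mu\rho)$ factors cancel, and the $(1-(\mu\rho)^t)$ factors cancel, collapsing this whole term to strictly less than $\frac{\eta\gamma}{2}\norm{z(0)}$. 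This engineered telescoping is precisely the reason for the peculiar shape of \eqref{eqn:alpha_cond}, and getting these cancellations to line up is the main obstacle; the rest is bookkeeping.

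Finally, summing the four contributions, the bracket is strictly less than $\frac{1}{2\mu} + \bigl(\frac{\eta\gamma}{2}\norm{z(0)} + \eta\beta + l\norm{K(0)-K^*}\bigr) \leq \frac{1}{2\mu} + \frac{1}{2\mu} = \frac{1}{\mu}$, again invoking \eqref{eqn:init_cond}. This yields $\norm{z(t+1)} \leq \frac{1}{\mu}\norm{z(t)}$, completing both halves of the induction. The desired estimate \eqref{eqn:x_linear} is then just the first inequality of \eqref{eqn:clm} rewritten through $z(t) = x(t)-x^*$.
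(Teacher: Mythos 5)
Your proposal is correct and follows essentially the same route as the paper's own proof: a strong induction in which the base case comes from \eqref{eqn:zt_kt} together with \eqref{eqn:init_cond}, the uniform bound $\norm{z(t)} < \frac{1}{\mu\eta\gamma}$ is obtained by chaining the contraction, and the historical sum in \eqref{eqn:zt_full} is collapsed via the geometric decay of $\norm{z(\cdot)}$, the condition $\mu\rho < 1$, and the step-size bound \eqref{eqn:alpha_cond}, before closing with \eqref{eqn:init_cond}. The only differences are cosmetic (a shift in the induction index and your explicit justification that $\eta\beta > 0$ makes the bound $\norm{z(0)} < \frac{1}{\mu\eta\gamma}$ strict, which the paper asserts without comment).
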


\begin{proof}
We proof the aforementioned claim by the principle of induction. First, we show that the claim is true for iteration $t=0$. At $t=0$, from~\eqref{eqn:zt_kt} we have
\begin{align*}
    \norm{z(1)} \leq & \norm{z(0)} \left(\eta \frac{\gamma}{2}\norm{z(0)} + \eta \beta + l \norm{\widetilde{K}(0)} \right).
\end{align*}
Upon substituting above from the condition~\eqref{eqn:init_cond},
\begin{align*}
    \norm{z(1)} \leq & \frac{1}{2\mu}\norm{z(0)}.
\end{align*}
Since $\mu > 1$, from above we have
\begin{align*}
    \norm{z(1)} < \frac{1}{\mu}\norm{z(0)}.
\end{align*}
Moreover,~\eqref{eqn:init_cond} implies that $\norm{z(0)} < \frac{1}{\mu \eta \gamma}$. Thus, the claim holds for $t=0$. Now, we assume that the claim is true for the iterations from $0$ to $t$. We need to show that the claim is also true for the iteration $t+1$. From the above assumption we have
\begin{align}
    \norm{z(t+1)} \leq \frac{1}{\mu} \norm{z(t)} \leq \frac{1}{\mu^2} \norm{z(t-1)} \leq \ldots \leq \frac{1}{\mu^{t+1}} \norm{z(0)} < \frac{1}{\mu^{t+1}} \frac{1}{\mu \eta \gamma}. \label{eqn:zt_contract}
\end{align}
Since $\mu > 1$, the above implies that $\norm{z(t+1)} < \frac{1}{\mu \eta \gamma}$.
Now, consider the expression $\left( \norm{z(t)} + \rho \norm{z(t)} + \ldots + \rho^{t} \norm{z(0)} \right)$ in the R.H.S. of~\eqref{eqn:zt_full} for iteration $t+1$. Upon substituting from~\eqref{eqn:zt_contract},
\begin{align*}
    \norm{z(t)} + \rho \norm{z(t)} + \ldots + \rho^{t} \norm{z(0)} \leq \norm{z(0)} \left( \frac{1}{\mu^t} + \frac{\rho}{\mu^{t-1}} + \ldots + \rho^t \right) = \norm{z(0)} \frac{1-(\mu \rho)^{t+1}}{\mu^t (1-\mu \rho)}.
\end{align*}
From the condition of Theorem~\ref{thm:conv} we have that $\mu \rho < 1$. Upon substituting from above in~\eqref{eqn:zt_full} for iteration $t+1$,
\begin{align}
    & \norm{z(t+2)} \leq \norm{z(t+1)} \left(\eta \frac{\gamma}{2}\norm{z(t+1)} + \eta \beta + l \rho^{t+1} \norm{\widetilde{K}(0)} + \eta \gamma l ~ \alpha(t) \norm{z(0)} \frac{1-(\mu \rho)^{t+1}}{\mu^t (1-\mu \rho)} \right). \label{eqn:zt2}
\end{align}
From Step 1 we have $\rho < 1$. Thus, $ l \rho^{t+1} \norm{\widetilde{K}(0)} < l \norm{\widetilde{K}(0)}$. Additionally, if $\alpha(t) < \frac{\mu^t (1-\mu \rho)}{2l(1-(\mu \rho)^{t+1}}$, then 
$\eta \gamma l ~ \alpha(t) \norm{z(0)} \frac{1-(\mu \rho)^{t+1}}{\mu^t (1-\mu \rho)} < \eta \frac{\gamma}{2} \norm{z(0)}$. Using condition~\eqref{eqn:init_cond} then we have
\begin{align}
    \eta \beta + l \rho^{t+1} \norm{\widetilde{K}(0)} + \eta \gamma l ~ \alpha(t) \norm{z(0)} \frac{1-(\mu \rho)^{t+1}}{\mu^t (1-\mu \rho)} < \frac{1}{2\mu}. \label{eqn:half_mu}
\end{align}
We have shown above that $\norm{z(t+1)} < \frac{1}{\mu \eta \gamma}$, which implies that $\eta \frac{\gamma}{2}\norm{z(t+1)} < \frac{1}{2\mu}$. Upon substituting from above and~\eqref{eqn:half_mu} in~\eqref{eqn:zt2},
\begin{align*}
    \norm{z(t+2)} \leq \frac{1}{\mu} \norm{z(t+1)}.
\end{align*}
Thus, the claim holds for iteration $t+1$. Due to the principle of induction, the proof of the claim is complete.
\end{proof}
Equation~\eqref{eqn:x_linear} directly follows from the aforementioned claim, and concludes the proof of Theorem~\ref{thm:conv}.

\subsection{Proof of Theorem~\ref{thm:superlinear}}
\label{prf:thm2}

In this section, we present the proof of Theorem~\ref{thm:superlinear}. For this proof, we borrow the results~\eqref{eqn:kt_conv} and~\eqref{eqn:zt_kt} from the proof of Theorem~\ref{thm:conv}.

Under Assumption~\ref{assump_5}, $\nabla^2 f(x(t))$ is positive definite for $x(t) \in \D$. So, Assumption~\ref{assump_5} is a special case of Assumption~\ref{assump_4}. Moreover, since $\nabla^2 f(x(t))$ is positive definite for $x(t) \in \D$, Theorem~\ref{thm:conv} is applicable with $\beta = 0$ in this case.
To prove~\eqref{eqn:x_superlinear}, consider~\eqref{eqn:zt_kt} from the proof of Theorem~\ref{thm:conv}. Upon taking limits on both sides as $t \to \infty$ and substituting $\beta  = 0$, we get
\begin{align}
    \lim_{t \to \infty} \frac{\norm{z(t+1)}}{\norm{z(t)}} \leq \lim_{t \to \infty} \left( \eta \frac{\gamma}{2}\norm{z(t)} + l \norm{\widetilde{K}(t)} \right). \label{eqn:zt_lim}
\end{align}
Since $\mu > 1$, from~\eqref{eqn:x_linear} of Theorem~\ref{thm:conv} we have that $\lim_{t \to \infty} \norm{z(t)} = 0$. Since $\rho < 1$ and $\lim_{t \to \infty} \norm{z(t)} = 0$, from~\eqref{eqn:kt_conv} we have $\lim_{t \to \infty} \norm{\widetilde{K}(t)} = 0$. Upon substituting them in~\eqref{eqn:zt_lim} above, we obtain~\eqref{eqn:x_superlinear}. Hence, the proof.

\end{document}